\title{Categories and orbispaces}
\author{Stefan Schwede}
\address{Mathematisches Institut, Universit{\"a}t Bonn, Germany}
\email{schwede@math.uni-bonn.de}
\numberwithin{equation}{section}
\newtheorem{thm}[equation]{Theorem}    % Standard theorem environment
\newtheorem{prop}[equation]{Proposition}  
\newtheorem{cor}[equation]{Corollary}
\theoremstyle{definition}
\newtheorem{defn}[equation]{Definition}    % Definition environment with 
\newtheorem{con}[equation]{Construction}
\newtheorem{eg}[equation]{Example}
\newtheorem{rem}[equation]{Remark}
\DeclareMathOperator{\colim}{colim}
\DeclareMathOperator{\gl}{gl}
\DeclareMathOperator{\aut}{aut}
\DeclareMathOperator{\map}{map}
\DeclareMathOperator{\Id}{Id}
\DeclareMathOperator{\Ex}{Ex}
\DeclareMathOperator{\Sd}{Sd}
\DeclareMathOperator{\ev}{ev}
\DeclareMathOperator{\Ho}{Ho}
\DeclareMathOperator{\pos}{pos}
\DeclareMathOperator{\Mod}{-mod}
\DeclareMathOperator{\op}{op}
\DeclareMathOperator{\Orb}{Orb}
\DeclareMathOperator{\Fun}{Fun}
\newcommand{\mR}{{\mathbb R}}
\newcommand{\mN}{{\mathbb N}}
\newcommand{\Cc}{{\mathcal C}}
\newcommand{\Dc}{{\mathcal D}}
\newcommand{\Gc}{{\mathcal G}}
\newcommand{\Ic}{{\mathcal I}}
\newcommand{\Lc}{{\mathcal L}}
\newcommand{\Mc}{{\mathcal M}}
\newcommand{\Nc}{{\mathcal N}}
\newcommand{\Uc}{{\mathcal U}}
\newcommand{\Oc}{{\mathcal O}}
\newcommand{\Xc}{{\mathcal X}}
\newcommand{\bL}{{\mathbf L}}
\newcommand{\bO}{{\mathbf O}}
\newcommand{\bT}{{\mathbf T}}
\newcommand{\cat}{{\mathbf {cat}}}
\newcommand{\poset}{{\mathbf {poset}}}
\newcommand{\spc}{spc}
\newcommand{\sset}{\mathbf{sset}}
\newcommand{\sets}{\mathbf{sets}}
\newcommand{\grp}{\mathbf{grp}}
\newcommand{\iso}{\cong}
\newcommand{\xra}{\xrightarrow}
\newcommand{\xla}{\xleftarrow}
\renewcommand{\to}{\longrightarrow}
\begin{document}

\begin{abstract}
Constructing and manipulating homotopy types from categorical
input data has been an important theme in algebraic topology for decades.
Every category gives rise to a `classifying space', 
the geometric realization of the nerve.
Up to weak homotopy equivalence, every space is the classifying space of a small category.
More is true: the entire homotopy theory of topological
spaces and continuous maps can be modeled by categories and functors.
We establish a vast generalization of the equivalence of the homotopy theories
of categories and spaces: small categories
represent refined homotopy types of orbispaces whose underlying coarse moduli space
is the traditional homotopy type hitherto considered.

A {\em global equivalence} is a functor $\Phi:\Cc\to\Dc$ between small categories
with the following property: for every finite group $G$, the functor
$G \Phi:G\Cc\to G\Dc$ induced on categories of $G$-objects is a weak equivalence.
We show that the global equivalences are part of a model structure 
on the category of small categories, which is moreover Quillen
equivalent to the homotopy theory of orbispaces in the sense of Gepner and Henriques.
Every cofibrant category in this global model structure is opposite to a
{\em complex of groups} in the sense of Haefliger.
\end{abstract}

\maketitle

%%%%%%%%%%%%%%%%%%%%   Start of main body of article

\section*{Introduction}

Constructing and manipulating homotopy types from categorical
input data has been an important theme in algebraic topology for decades.
Every category gives rise to a topological space, sometimes called
its `classifying space', by taking the geometric realization of the nerve.
Up to weak homotopy equivalence, every space is the classifying space of a category.
But much more is true: the entire {\em homotopy theory} of topological
spaces and continuous maps 
can be modeled by categories and functors.

In this paper we prove a substantial strengthening of the equivalence of homotopy theories
of categories and spaces. Our perspective is that categories 
don't just represent mere homotopy types; rather, small categories
represent refined homotopy types of orbispaces whose underlying `coarse moduli space'
is the traditional non-equivariant homotopy type hitherto considered.
We emphasize that we talk about plain, traditional 1-categories here,
with no additional structure, enhancement or higher categorical bells and whistles attached.

One can informally think of an orbispace as the quotient of a space by the
action of a finite group, but with information about the isotropy groups of the
action built into the formalism.
In this sense, orbispaces are to topological spaces as orbifolds are to manifolds.
The homotopy theory of orbispaces arises in several different contexts.
The orbispaces we use here were introduced by Gepner and Henriques 
in \cite{gepner-henriques}, who identify them with a specific kind of topological stacks.
As we explain in \cite{schwede:orbispace},
orbispaces can be interpreted as `spaces with an action of the universal compact Lie group',
and they also model unstable global homotopy theory in the sense of \cite{schwede:global}.
We refer to Remark \ref{rk:other orbispc} below for more information, background and
motivation about orbispaces.

By definition, a functor $\Phi:\Cc\to\Dc$ between small categories
is a {\em global equivalence} if for every finite group $G$
the functor $G \Phi:G\Cc\to G\Dc$ induced on categories of $G$-objects
is a weak equivalence. In other words, to qualify as a global equivalence,
the functor $\Phi$ must induce weak homotopy equivalences of simplicial sets
\[ N (G \Phi)\ : \ N(G\Cc)\ \to \ N(G\Dc) \]
for all finite groups $G$, where $N$ denotes the nerve construction.
We show in Theorem \ref{thm:global cat} that the global equivalences
are the weak equivalences in a certain model structure 
on the category of small categories, the {\em global model structure}.

The connection between orbispaces and the global homotopy theory of categories
comes from the natural structure that relates
the categories $G\Cc$ and $K\Cc$ for different groups $G$ and $K$.
The assignment $G\mapsto G\Cc$ has a specific 2-functoriality: every group homomorphism
$\alpha:K\to G$ gives rise to a restriction functor $\alpha^*:G\Cc\to K\Cc$,
and every group element $g\in G$ gives rise to a
natural isomorphism $l_g:\alpha^*\Longrightarrow (c_g\circ\alpha)^*$,
where $c_g:G\to G$ is the inner automorphism $c_g(\gamma)=g\gamma g^{-1}$.
This data altogether forms a strict contravariant 2-functor 
from the 2-category $\grp$ of groups, homomorphism and conjugations, 
see Construction \ref{def:Grp}.
Taking nerves turns the 2-category $\grp$ into a category $\bO_{\gl}=N(\grp)$ 
enriched in simplicial sets,
and the assignment $G\mapsto N(G \Cc)$ becomes a contravariant simplicial functor
from $\bO_{\gl}$ to simplicial sets.
The geometric realization of the simplicial category $\bO_{\gl}$
is the global orbit category as defined by Gepner
and Henriques \cite{gepner-henriques}, so such a functor is precisely an {\em orbispace}.
We call the orbispace with values $N(G\Cc)$ the {\em global nerve} of the category $\Cc$.
Theorem \ref{thm:cat and orbispace} says that the global nerve functor
induces an equivalence from the global homotopy theory of small categories
to the homotopy theory of orbispaces.
In particular, every orbispace is equivalent to the global nerve of some small category.

The global homotopy theory of categories also has a close connection to
{\em complexes of groups} in the sense of Haefliger \cite{haefliger}.
Every non-equivariant homotopy type can be modeled by a poset,
and poset categories already model the full homotopy theory
of spaces. Indeed, every cofibrant category in Thomason's model structure
\cite{thomason:model cat cat} is a poset, and Raptis showed \cite{raptis} that the
inclusion of posets into small categories is a Quillen equivalence.
The {\em global} homotopy type of a category, however, makes essential use of
automorphisms, and only very special global homotopy types 
arise as global nerves of poset
(the constant ones, compare Example \ref{eg:posets}).  
The role of poset categories in our context is played by {\em complexes of groups}
in the sense of \cite{bridson-haefliger, haefliger},
or rather by the variant that allows for non-injective transition homomorphisms,
see Definition \ref{def:complex of groups}.
We show in Theorem \ref{thm:cofibrant is complex}
that the opposite of every globally cofibrant category is associated to a complex of groups.

The original motivation for this project was the connection between
categories and orbi\-spaces explained in the previous paragraphs.
However, the refined model structure and the Quillen equivalence to a category
of simplicial functors work substantially more generally. 
Instead of looking at $G$-objects in $\Cc$,
we can consider nerves of functor categories $\Fun(I,\Cc)$ from specific
small categories $I$. The only serious hypothesis we impose on the categories
$I$ is that they are {\em strongly connected}, i.e., there is at least one morphism
between every ordered pair of objects. This condition is crucial to gain homotopical
control over certain pushouts of categories, 
see Theorem \ref{thm:Dwyer M pushout} for the precise statement.
We develop the theory in Sections \ref{sec:global cats}
and \ref{sec:cats vs sfunctors} in this generality,
and then specialize to the motivating case of equivariant objects 
in Section \ref{sec:group case}.

{\bf Acknowledgments} 
This paper was written in the spring of 2018, while I was a long term guest at the
Centre for Symmetry and Deformation at the University of Copenhagen;
I would like to thank the Center for Symmetry and Deformation 
for the hospitality and the stimulating atmosphere during this visit.
I would particularly like to thank Jesper Grodal for several interesting
conversation on the topics of this paper. 
I am grateful to Tobias Lenz and Viktoriya Ozornova for
providing the argument that shows left properness of the $\Mc$-model structure,
and to Markus Hausmann for various interesting comments and suggestions.
The author is a member of the Hausdorff Center for Mathematics
at the University of Bonn.

\section{A refined model structure for categories}
\label{sec:global cats}

In this section we introduce a refined notion of equivalences for functors between
small categories, via functor categories out of specific kinds of categories.
In Theorem~\ref{thm:M cat} we extend these equivalences to the `$\Mc$-model structure'
on the category of small categories.
In Theorem~\ref{thm:Mcat and Morbispace}, we then show that the $\Mc$-model structure
is Quillen equivalent to a certain category of simplicial presheaves.
In Section \ref{sec:group case} we specialize to the main case of interest,
namely when $\Mc$ is the class of classifying categories of finite groups.

\bigskip

We begin with some history and background, to put our results into context.
Along the way, we also fix our notation.
The insight that categories and functors model the homotopy theory of spaces
via the geometric realization of the nerve construction developed in stages.
Geometric realization and singular complex provide homotopically meaningful
ways to pass back and forth between topological spaces and simplicial sets,
so our summary focuses on the relationship between categories and simplicial sets.
I do not know the precise origin of the nerve construction for categories,
but the name seems to derive from the earlier idea of the nerve of a covering.
The earliest published account I know of that explicitly uses the nerve of a category
is the book of Gabriel and Zisman \cite[II.4]{gabriel-zisman}.
The first reference to the term `nerve' is Segal's \cite{segal:classifying spaces}
who acknowledges that `(...) all the ideas are implicit in the work of Grothendieck'.

We write $\cat$ for the category of small categories and functors.
We let $\Delta$ denote the simplicial indexing category with objects
the finite totally ordered sets $[n]=\{0\leq 1\leq \dots\leq n\}$, for $n\geq 0$,
and morphisms the weakly monotone maps.
We write $\sset$ for the category of simplicial sets, i.e., contravariant
functors from $\Delta$ to sets.
A fully faithful embedding $p:\Delta\to\cat$
sends $[n]$ to its poset category, i.e., the category with object set $\{0,1,\dots,n\}$
and a unique morphism from $i$ to $j$ whenever $i\leq j$.
A weakly monotone map then extends uniquely to a functor between the associated
poset categories. 
The {\em nerve} of a small category $\Cc$ is the composite simplicial set
\[ \Delta^{\op} \ \xra{\ p^{\op}\ } \ \cat^{\op} \ \xra{\ \cat(-,\Cc)}\ \sets \ .\]
So $(N\Cc)_n$ is the set of functors $[n]\to \Cc$, 
and  such a functor is uniquely determined
by a string of $n$ composable morphisms, namely the images of the morphisms $i-1\leq i$
for $i=1,\dots,n$. For varying $\Cc$, 
the nerve construction becomes a functor $N:\cat\to\sset$.

A functor $F:\Cc\to \Dc$ is a {\em weak equivalence} if the induced morphism
of simplicial sets  $N F:N \Cc\to N\Dc$ is a weak equivalence, i.e., becomes a homotopy
equivalence after geometric realization.
The nerve functor is fully faithful, and its essential image
consists of the 2-coskeletal simplicial sets, see \cite[Prop.\,2.2.3]{illusie-II}.
The nerve functor has a left adjoint $c:\sset\to\cat$, see \cite[II.4]{gabriel-zisman}.
The category $c(X)$ associated with a simplicial set $X$ has 
as objects the vertices of $X$. The morphisms in $c(X)$
are freely generated by the 1-simplices of $X$, with $f\in X_1$
considered as a morphism from $X(d_1)(f)$ to $X(d_0)(f)$, modulo the relations
$X(s_0)(x)=\Id_x$ for all $x\in X_0$, and
\[ X(d_1)(z)\ = \ X(d_0)(z)\circ X(d_2)(z) \]
for every $z\in X_2$. The functor $c$ is not only left adjoint, 
but also left inverse to the nerve; in other words, for every small category $\Cc$,
the adjunction counit
\[ c(N\Cc)\ \to \ \Cc  \]
is an isomorphism of categories.
Unfortunately, the `categorification functor' $c$ 
has poor homotopical properties in general.

The first known fully homotopical functor from simplicial sets to categories 
that is homotopy-inverse to the nerve
seems to be the simplex category construction, see \cite[Thm\,3.2]{illusie-II};
this implies in particular that the nerve functor induces 
an equivalence of homotopy categories.
According to Illusie \cite[Sec.\,2.3]{illusie-II}, these facts are due to Quillen.
There are other constructions that turn simplicial sets 
into categories in a homotopically meaningful way and that are homotopy inverse
to the nerve, and we refer to the introduction of \cite{fritsch-latch} 
for further references.

A strengthening and a particularly structured way of comparing the homotopy theories of
categories and simplicial sets was obtained by Thomason \cite{thomason:model cat cat}.
He realized that the composite
\[ c\circ\Sd^2 \ : \ \sset \ \to \ \cat \]
of the 2-fold iterated subdivision in the sense of Kan \cite{kan:on css}
and the categorification functor
can also serve as a homotopy-inverse to the nerve functor,
as both natural transformations
\[ K \ \xla{\qquad}\ \Sd^2 K \ \xra{\ \eta_{\Sd^2 K}}\ N(c(\Sd^2 K)) \]
are weak equivalences of simplicial sets; a proof can be found in \cite[4.12 (v)]{fritsch-latch}.
A special feature of the functor $c\circ \Sd^2$
is that it takes finite simplicial sets (i.e., those with finitely many non-degenerate
simplices) to finite categories (i.e., those with finitely many objects and morphisms).
In \cite{thomason:model cat cat}, Thomason extended the weak equivalences to
a model structure on the category of small categories, designed so that
the adjoint functor pair $(c\circ\Sd^2,\Ex^2\circ N)$ becomes
a Quillen equivalence to Quillen's model structure \cite[II.3]{Q} on simplicial sets.

Thomason's model structure and Quillen equivalence 
have been generalized to an equivariant context
by Bohmann, Mazur, Osorno, Ozornova, Ponto and Yarnall \cite{BMOOPY}.
For a group $G$, they construct 
a model structure on the category of small $G$-categories
and a Quillen equivalence to the category of $G$-spaces.
Our results here go in a different direction:
we start from a category with no extra structure,
and our notion of `global equivalences'
uses $G$-objects in $\Cc$ (as opposed to $G$-actions on $\Cc$).

\medskip

After these historical remarks, we move on towards the new mathematics.
We will refine the weak equivalences of categories by testing through functors
from certain indexing categories. An important special case is to test
by the classifying categories for all finite groups,
in which case the resulting homotopy theory will turn out to be equivalent
to the orbispaces in the sense of Gepner and Henriques \cite{gepner-henriques};
we discuss this special case in more detail in Section \ref{sec:group case}.

Already in Thomason's paper \cite{thomason:model cat cat}
a certain class of functors, the {\em Dwyer maps}, plays a special role.
This class of functors will also be important for us.
We recall from \cite[Def.\,4.1]{thomason:model cat cat} 
that a functor $i:A\to B$ between small categories 
is a {\em Dwyer map} if it is 
\begin{itemize}
\item fully faithful and injective on objects;
\item a {\em sieve}, i.e., for every $B$-morphism $f:b\to a$ with $a$ in $A$,
  both the object $b$ and the morphism $f$ belong to $A$; and
\item there is a full subcategory $W$ of $B$ that is a cosieve in $B$, contains $A$ and
  such that the inclusion $A\to W$ admits a right adjoint.
\end{itemize}
I have been unable to find out why Thomason attaches Bill Dwyer's name to these
functors.
Dwyer maps can be informally thought of as categorical analogs of the inclusion
of a neighborhood deformation retract; they play a similar role in the context of small 
categories as the one played by h-cofibrations in the context of topological spaces.
Dwyer maps are stable under cobase change
\cite[Prop.\,4.3]{thomason:model cat cat},
composition (finite and sequential) \cite[Lemma 5.3]{thomason:model cat cat},
and cobase change along Dwyer maps are homotopical for
weak equivalences \cite[Cor.\,4.4]{thomason:model cat cat}. 

Cisinski noted \cite{cisinski-dwyer}
that contrary to Thomason's claims 
in \cite[Lemma 5.3 (3), Prop.\,5.4]{thomason:model cat cat},
Dwyer maps are {\em not} closed under retracts,
and there are cofibrations in Thomason's model structure that are not Dwyer maps.
The missing retract property of Dwyer maps is not used in the proof of
the model category axioms nor the Quillen equivalence;
Thomason does use it in the proof of properness, but Cisinski fixes this gap
in \cite{cisinski-dwyer}.

We need additional properties of Dwyer maps
that are not explicitly stated in \cite{thomason:model cat cat}.
The first two properties in the following proposition are straightforward;
the third one in Theorem~\ref{thm:Dwyer M pushout} is more involved.
Given two small categories $I$ and $A$, we denote by $\Fun(I,A)$
the category of functors from $I$ to $A$, with natural transformations as
morphisms.

\begin{prop}\label{prop:preserve Dwyer}
For every Dwyer map $i:A\to B$ and every small category $I$,
the functor $i\times I:A\times I\to B\times I$ and the
functor $\Fun(I,i):\Fun(I,A)\to \Fun(I,B)$ is a Dwyer map.
\end{prop}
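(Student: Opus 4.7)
My plan is to verify the three defining properties of a Dwyer map in each case, by pulling the required data back from a witness $W\subseteq B$ for $i:A\to B$ being a Dwyer map. Throughout I identify $A$ with its essential image in $B$ (legitimate since $i$ is fully faithful and injective on objects) so that ``being in $A$'' is a property of objects and morphisms of $B$.

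For $i\times I:A\times I\to B\times I$ everything is essentially formal. Full faithfulness and injectivity on objects are immediate from the componentwise description of hom-sets and objects in a product. For the sieve property, a morphism $(f,\alpha):(b,j)\to(a,k)$ in $B\times I$ with $(a,k)\in A\times I$ has $f:b\to a$ a $B$-morphism ending in $A$, so by the sieve property of $A\subseteq B$ the source $b$ and the morphism $f$ lie in $A$; hence $(b,j)$ and $(f,\alpha)$ lie in $A\times I$. For the cosieve, take $W\times I\subseteq B\times I$: the same componentwise argument shows this is a cosieve containing $A\times I$, and if $r:W\to A$ is right adjoint to the inclusion, then $r\times\Id_I:W\times I\to A\times I$ is right adjoint to $A\times I\hookrightarrow W\times I$ (with unit and counit given componentwise).

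For $\Fun(I,i):\Fun(I,A)\to\Fun(I,B)$ the argument is the same in spirit but requires a pointwise verification together with one extra check for morphisms in $I$. Full faithfulness and injectivity on objects follow because a natural transformation between functors that factor through the full subcategory $A$ is determined by its componentwise data in $A$. For the sieve property, let $\tau:F\Rightarrow G$ be a morphism in $\Fun(I,B)$ with $G$ landing in $A$. Pointwise, $\tau_j:F(j)\to G(j)$ is a $B$-morphism ending in $A$, so the sieve property gives $F(j)\in A$ and $\tau_j$ in $A$. To see that $F$ itself factors through $A$, apply the sieve property once more to $F(\alpha):F(j)\to F(j')$ for each $\alpha:j\to j'$ in $I$, whose target already lies in $A$. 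Thus $F\in\Fun(I,A)$ and $\tau$ is a morphism there.

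For the cosieve witness, take $\Fun(I,W)\subseteq\Fun(I,B)$, which is full since $W$ is full in $B$. To check it is a cosieve, given $\sigma:F\Rightarrow H$ with $F$ landing in $W$, the cosieve property of $W$ applied pointwise to $\sigma_j$ and then to $H(\alpha)$ (whose source lies in $W$) shows $H\in\Fun(I,W)$ and $\sigma$ is a morphism of $\Fun(I,W)$. Clearly $\Fun(I,A)\subseteq\Fun(I,W)$. Finally, postcomposition with the right adjoint $r:W\to A$ yields a functor $\Fun(I,r):\Fun(I,W)\to\Fun(I,A)$, and the unit and counit of $(\text{inclusion})\dashv r$ whisker to natural transformations $\Id_{\Fun(I,A)}\Rightarrow\Fun(I,r)\circ(\text{incl})$ and $(\text{incl})\circ\Fun(I,r)\Rightarrow\Id_{\Fun(I,W)}$ satisfying the triangle identities pointwise, giving the required right adjoint. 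The only non-trivial conceptual point is remembering to apply the sieve/cosieve property to $F(\alpha)$ and $H(\alpha)$ (not just to the components $\tau_j,\sigma_j$) to ensure that the resulting functor lands entirely in $A$ or $W$; once that is in place, the verification is routine.
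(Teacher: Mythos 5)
Your proof is correct and follows essentially the same route as the paper's, which simply asserts without detail that $A\times I$ and $\Fun(I,A)$ are sieves, $W\times I$ and $\Fun(I,W)$ are cosieves, and the inclusions admit right adjoints. One minor redundancy: the ``extra check'' you flag for $F(\alpha)$ and $H(\alpha)$ is already automatic from the fullness of $A$ (resp.\ $W$) in $B$ once the pointwise argument puts all the objects $F(j)$ (resp.\ $H(j)$) in the subcategory, so it is not actually needed.
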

\begin{proof}
  We may assume that $i$ is the inclusion of a full subcategory $A$ of $B$ that
  is a sieve in $B$; moreover, there is a cosieve $W$ of $B$ that contains $A$ and
such that the inclusion $A\to W$ has a right adjoint.

The category $A\times I$ is a full subcategory and a sieve in $B\times I$,
$W\times I$ is a full subcategory and a cosieve in $B\times I$,
and the inclusion $A\times I\to W\times I$ has a right adjoint.
Similarly, the category $\Fun(I, A)$ is a full subcategory and a sieve in $\Fun(I, B)$,
$\Fun(I, W)$ is a full subcategory and a cosieve in $\Fun(I, B)$,
and the inclusion $\Fun(I, A)\to\Fun(I,W)$ has a right adjoint.
\end{proof}

At various places we need homotopical control over certain pushouts of categories.
In general, pushouts of categories can be a mess (i.e., hard to identify explicitly),
and the homotopy type of the nerve of a pushout does not bear any apparent relationship
to the nerves of the input data.
However, Thomason showed in \cite[Prop.\,4.3]{thomason:model cat cat}
that pushouts along Dwyer maps behave well homotopically:
the nerve functor takes such pushouts to homotopy pushouts of simplicial sets.

Since our refined equivalences depend on categories of functors,
we also need control about functors from a fixed category
into certain pushouts of categories.
The functor $\Fun(I,-):\cat\to\cat$ does not in general preserve pushouts.
However, Theorem \ref{thm:Dwyer M pushout} below 
shows that if $I$ is strongly connected, then $\Fun(I,-)$
preserves pushouts along Dwyer maps.

\begin{con}[Pushouts along Dwyer maps]\label{con:Dwyer pushout}
An explicit description of a pushout along a fully faithful inclusion
of categories $i:A\to B$ is given in \cite[Prop.\,5.2]{fritsch-latch}. 
If the inclusion is a Dwyer map, then the description simplifies,
see for example the proof of \cite[Lemma 2.5]{BMOOPY}.
We recall this description in some detail here.

We let $i:A\to B$ be a Dwyer map and $k:A\to C$ any functor.  
We let $Z$ be the cosieve generated by $A$ in $B$, i.e., the full subcategory
of all $B$-objects that admit a $B$-morphism from an object in $A$.
Then the inclusion $A\to Z$ has a right adjoint by the hypothesis that $i$ is a Dwyer map.
Since $A$ is a full subcategory of $Z$,
we can choose a right adjoint $r:Z\to A$ that is the identity on $A$
and such that the adjunction counit $\epsilon:i r\to\Id_Z$ is the identity on $A$.
We define a category $D$ that will turn out to be a pushout of $i:A\to B$ and $k:A\to C$.
We let $V$ be the full subcategory of $B$ whose objects are the ones 
that do not belong to $A$.
The objects of $D$ are the disjoint union of the objects of $C$ and
the objects of $V$.
The morphism sets in $D$ are defined as
\[ D(d,d')\ = \
\begin{cases}
\ C(d,d')  & \text{ if $d,d'\in C$,}\\ 
C(d,k(r(d')))  & \text{ if $d\in C$ and $d'\in V\cap Z$, and}\\ 
\ B(d,d')  & \text{ if $d,d'\in V$.}
\end{cases}
\]
Moreover, there are no $D$-morphisms from objects in $V$ to objects in $C$,
and there are no $D$-morphisms from objects in $C$ to objects in $V\backslash Z$.
Composition is defined by requiring that $C$ and $V$ become full subcategories of $D$;
moreover, for $c,c'\in C$ and $z,z'\in V\cap Z$, composition
  \[ D(c,z)\times D(c',c) \ \to \  D(c',z)  \]
    is composition in $C$, and composition
    \[ D(z,z')\times D(c,z)  \ \to \  D(c,z')  \]
    is defined by
    \[ \beta \circ \gamma\ = \ k(r(\beta))\circ \gamma  \ . \]
It is straightforward to check that composition in $D$ is well-defined, 
associative and unital, so we have indeed defined a category.

Now we introduce the functors that express the category $D$
as a pushout of $i:A\to B$ and $k:A\to C$.
The functor $j:C\to D$ is simply the inclusion.
We define a functor $h:B\to D$ on objects by
\[ h(b)\ = \
\begin{cases}
  k(b) & \text{ if $b\in A$, and}\\
  \ b & \text{ if $b\in V$.}
\end{cases} \]
On morphisms, $h$ is given by
\[ h(f:b\to b')\ = \
\begin{cases}
     k(r(f))  & \text{ if $b\in A$ and $b'\in Z$, and}\\
  \quad f & \text{ if $b,b'\in V$.}
\end{cases} \]
Since $A$ is a sieve in $B$, there are no morphisms from
objects in $V$ to objects in $A$;
similarly, since $A\subset Z$ and $Z$ is a cosieve in $B$, 
there are no morphisms from objects in $A$ to objects in $B\backslash Z$.
So the above recipe is a complete definition of $h$ on morphisms.
The compatibility of this assignment with identity morphisms and composition
is straightforward from the definitions, so $h:B\to D$ is a functor.
The condition $h\circ i=j\circ k$ as functors $A\to D$ was built into
the definitions.
For later use we observe a relation that comes from the fact that $\epsilon:i r\to\Id_Z$
is the counit of an adjunction.
Given $c\in C$ and $z\in V\cap Z$, we have
\[ D(c,z)\ = \ C(c,k(r(z))) \ = \ D(c,k(r(z)))\ .\]
So a $C$-morphism $f:c\to k(r(z))$
is both a $D$-morphism $j(c)\to h(z)$
and a $D$-morphism $j(c)\to j(k(r(z)))=h(i(r(z)))$.
Moreover, with these two different interpretations,
the factorization 
\begin{equation}\label{eq:can fact}
  f \ = \   h(\epsilon_z) \circ j(f)  \ : \  j(c)\ \to \ h(z)
\end{equation}
holds in $D$.

The final step is to verify that the functors $h:B\to D$ and $j:C\to D$
enjoy the universal property of a pushout of $i:A\to B$ and $k:A\to C$. 
So we let $\varphi:B\to E$ and $\psi:C\to E$
be functors such that $\varphi i=\psi k$. We must show that there 
is a unique functor $\kappa:D\to E$ satisfying 
\[\kappa h\ =\ \varphi \text{\quad and\quad}  \kappa j\ =\ \psi\ . \]
Since the objects of $D$ are the disjoint union of the objects of $C$ and $V$,
the behavior of the functor $\kappa$ on objects is forced to be
\[ \kappa(d)\ = \
\begin{cases}
  \psi(c) & \text{ if $d=j(c)$ for $c\in C$, and}\\
  \varphi(v) & \text{ if $d=h(v)$ for $v\in V$.}
\end{cases} \]
The behavior on morphisms is similarly forced:
\[
\kappa(f)\ = \
\begin{cases}
\quad \psi(g) & \text{\ if $f=j(g)$ for a $C$-morphism $g$,}\\
\quad \varphi(g) &\text{\ if $f=h(g)$ for a $V$-morphism $g$, and}\\
\varphi(\epsilon_z)\circ\psi(f) & \text{\ if $f\in D(c,z)=C(c,k(r(z)))$ for $c\in C$ and $z\in V\cap Z$.}
\end{cases}
\]
The third case uses the relation \eqref{eq:can fact}.
This proves that there is at most one functor $\kappa$ that satisfies 
$\kappa h= \varphi$ and $\kappa j=\psi$.
Conversely, given $\varphi$ and $\psi$, we can define $\kappa$ by the equations above.
We omit the verification that then $\kappa$ is really a functor, i.e.,
compatible with composition. This completes the verification that
the functors $h:B\to D$ and $j:C\to D$ make the category $D$ into a pushout of the functors
$i:A\to B$ and $k:A\to C$. 
\end{con}

\begin{defn}
A category is {\em strongly connected}
if for every pair of objects $x,y$ there exists a morphism $x\to y$.  
\end{defn}

In particular, the nerve of a strongly connected category is connected,
but not conversely. For example, the poset category of $\{0\leq 1\}$
is connected, but not strongly connected.
Particular examples of strongly connected categories
are categories with a single object, i.e., classifying categories
for monoids. 

Now we come to a key technical result.

\begin{thm}\label{thm:Dwyer M pushout}
Consider a pushout square of small categories on the left
 \[ \xymatrix{ A \ar[r]^-k \ar[d]_i & C \ar[d]^j &&
\Fun(I, A) \ar[rr]^-{\Fun(I, k)} \ar[d]_{\Fun(I, i)} && \Fun(I, C) \ar[d]^{\Fun(I, j)}\\
   B \ar[r]_-h & D &&
   \Fun(I, B) \ar[rr]_-{\Fun(I, h)} && \Fun(I, D)
} \]
such that $i$ is a Dwyer map.
Then for every strongly connected category $I$, the commutative diagram
of categories on the right is a pushout
and the canonical morphism
\[ N\Fun(I,h)\cup N\Fun(I,j)\ : \ N\Fun(I,B)\cup_{N \Fun(I,A)} N\Fun(I,C) \ \to \ 
N\Fun(I, D)\]
is a weak equivalence of simplicial sets.
\end{thm}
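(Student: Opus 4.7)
The strategy is to apply the explicit pushout description of Construction~\ref{con:Dwyer pushout} twice: once to $i:A\to B$ and $k:A\to C$ to describe $D$, and once to $\Fun(I,i):\Fun(I,A)\to\Fun(I,B)$ and $\Fun(I,k):\Fun(I,A)\to\Fun(I,C)$ to describe the categorical pushout $P=\Fun(I,B)\cup_{\Fun(I,A)}\Fun(I,C)$; the second application is legal because $\Fun(I,i)$ is a Dwyer map by Proposition~\ref{prop:preserve Dwyer}. Write $V=B\backslash A$ for the full complement of the sieve $A$, and let $V'$ and $Z'$ denote, respectively, the full subcategory of $\Fun(I,B)$ on functors not belonging to $\Fun(I,A)$ and the cosieve in $\Fun(I,B)$ generated by $\Fun(I,A)$. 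The goal is to exhibit the canonical comparison functor $P\to\Fun(I,D)$ as an isomorphism by identifying the two Construction~\ref{con:Dwyer pushout} descriptions term by term.

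The crucial use of strong connectedness is the identification $V'=\Fun(I,V)$: if some $F(x)$ lay in $A$ while some $F(y)$ lay in $V$, strong connectedness would give a morphism $y\to x$ in $I$ with image $F(y)\to F(x)$ landing in the sieve $A$, forcing $F(y)\in A$, a contradiction; hence every $F:I\to B$ factors entirely through $A$ or entirely through $V$. A parallel argument gives $Z'=\Fun(I,Z)$, with the right adjoint to the inclusion $\Fun(I,A)\to\Fun(I,Z)$ realized by post-composition with $r:Z\to A$. The same strong-connectedness reasoning applied inside $D$, together with the absence of $D$-morphisms from $V$-objects to $C$-objects, shows that $\Fun(I,D)$ likewise decomposes on objects as $\Fun(I,C)\sqcup\Fun(I,V)$, matching the object set of $P$. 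On hom-sets, the $C$-to-$C$ and $V$-to-$V$ cases are immediate from full-subcategory properties; in the mixed case $F\in\Fun(I,C)$ and $F'\in V'\cap Z'=\Fun(I,V\cap Z)$, the composition rule $\beta\circ\gamma=k(r(\beta))\circ\gamma$ in $D$ translates the data of a natural transformation $F\to F'$ in $\Fun(I,D)$ into exactly the data of a natural transformation $F\to k\circ r\circ F'$ in $\Fun(I,C)$, matching $P(F,F')$.

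Given this isomorphism $P\iso\Fun(I,D)$, the nerve assertion follows immediately: since $\Fun(I,i)$ is a Dwyer map by Proposition~\ref{prop:preserve Dwyer}, Thomason's result \cite[Prop.~4.3]{thomason:model cat cat} that the nerve takes pushouts along Dwyer maps to homotopy pushouts of simplicial sets shows the canonical map from $N\Fun(I,B)\cup_{N\Fun(I,A)} N\Fun(I,C)$ to $NP\iso N\Fun(I,D)$ is a weak equivalence. The main obstacle I anticipate is the morphism-level bookkeeping, in particular verifying that the candidate comparison $P\to\Fun(I,D)$ respects composition; this is precisely where strong connectedness of $I$ is essential, since without it $V'$ would contain mixed functors not factoring through $\Fun(I,V)$ and the object sets of $P$ and $\Fun(I,D)$ would already disagree.
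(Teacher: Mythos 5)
Your proposal is correct and takes essentially the same route as the paper: apply Construction~\ref{con:Dwyer pushout} twice (legitimately, since $\Fun(I,i)$ is again a Dwyer map by Proposition~\ref{prop:preserve Dwyer}), use strong connectedness of $I$ to show every $F:I\to D$ lands entirely in $C$ or in $V=B\backslash A$, match objects and hom-sets of $\Fun(I,D)$ against the explicit pushout, and then deduce the nerve statement by factoring through $N(\Fun(I,B)\cup_{\Fun(I,A)}\Fun(I,C))$ and invoking Thomason's \cite[Prop.~4.3]{thomason:model cat cat}. The only cosmetic difference is that you verify $V'=\Fun(I,V)$ by appealing to the sieve property of $A$ in $B$ rather than arguing directly in $D$ as the paper does; the content is the same.
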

\begin{proof}
We exploit the explicit description of the pushout along a Dwyer map
that we recalled in Construction \ref{con:Dwyer pushout};
in other words, we assume that the category $D$
and the functors $h:B\to D$ and $j:C\to D$ are the ones defined there.
In particular, $D$ contains $C$ and $V=B\backslash A$ 
as disjoint full subcategories of $D$ that together contain all objects.

We let $I$ be a strongly connected category and $F:I\to D$
a functor. Since there is a morphism $x\to y$ for every pair
of objects in $I$, but there are no $D$-morphisms from objects in $V$ to objects in $C$,
the functor must take values entirely in $C$ or entirely in $V$.
Since $C$ and $V$ are full subcategories of $D$, the functor
$F$ lifts uniquely to a functor to $C$,
or it lifts uniquely to a functor to $V$.
In other words, the objects of the functor category $\Fun(I,D)$ are the disjoint union 
of the objects of $\Fun(I,C)$ and the objects of $\Fun(I,V)$.
Also, $\Fun(I,V)=\Fun(I,B)\backslash \Fun(I,A)$.

Now we identify natural transformations between two functors $F,G:I\to D$. 
If both functors take values in $C$, any natural transformation arises 
from unique $C$-valued natural transformation, because $C$ is a full subcategory.
Similarly, if both functors take values in $V$, all $D$-valued natural
transformations arise uniquely from $V$-valued natural transformations.

It remains to identify natural transformations $\tau:F\Longrightarrow G$
in the `mixed' case.
There are no $D$-morphisms from objects in $V$ to objects in $C$, hence also
no natural transformations from $V$-valued functors to $C$-valued functors.
In the remaining case we consider functors  $F:I\to C$ and $G:I\to V$.
As before we let $Z$ denote the cosieve generated by $A$ in $B$.
Since there are no $D$-morphisms from objects in $C$ to objects in $V\backslash Z$, 
the existence of $\tau$ implies that $G$ must takes values in $V\cap Z$.
The data of the transformation $\tau$ consists of $D$-morphisms 
\[ \tau(i)\ \in\  D(F(i),G(i))\ =\ C(F(i),k(r(G(i)))) \]
for all objects $i$ of $I$.
The retraction functor $r:Z\to A$ and the functor $k:A\to C$
give rise to a composite functor
\[ k\circ r\circ G \ : \ I \ \to \ C\ . \]
We can thus interpret the morphisms $\{\tau(i)\}_{i\in I}$ 
as the candidate components for a natural transformation of
$C$-valued functors $\tau':F\Longrightarrow k\circ r\circ G$.
Naturality of $\tau$ means the relation
\[ \tau(j)\circ F(\alpha)\ = \ G(\alpha)\circ \tau(i) 
\text{\quad in\quad}  D(F(i),G(j))\ =\ C(F(i),k(r(G(j)))) \]
for all $I$-morphisms $\alpha:i\to j$.
Composition in the category $D$ was defined so that this relation in fact means
the relation
\[ \tau(j)\circ F(\alpha)\ = \ k(r(G(\alpha)))\circ \tau(i) \ : \ F(i)\ \to \ k(r(G(j)))\]
among $C$-morphisms.
This shows that the collection of morphisms $\{\tau(i)\}_{i\in I}$
forms a natural transformation of $D$-valued functors $F\Longrightarrow G$ 
if and only if it
forms a natural transformation of $C$-valued functors $F\Longrightarrow k\circ r\circ G$,
i.e., 
\[ \Fun(I,D)(F,G)\ = \ \Fun(I,C)(F,k\circ r\circ G) \ .\]
On the other hand, the functor $\Fun(I,i):\Fun(I,A)\to \Fun(I,B)$ is also a Dwyer map
by Proposition \ref{prop:preserve Dwyer}.
So Construction \ref{con:Dwyer pushout} can also be used to describe a
pushout of the functors $\Fun(I,i):\Fun(I, A)\to \Fun(I, B)$ 
and $\Fun(I, k):\Fun(I, A)\to \Fun(I, C)$.
In this description, the objects and morphisms are precisely the
objects and morphisms of $\Fun(I, D)$ as identified above, and composition
also works out in the same way. So the category $\Fun(I, D)$
is indeed a pushout of the functors $\Fun(I, i):\Fun(I, A)\to \Fun(I, B)$ 
and $\Fun(I, k):\Fun(I, A)\to \Fun(I, C)$.

For that last claim we observe that the canonical morphism factors as the composite
\begin{align*}
  N\Fun(I,B)&\cup_{N \Fun(I,A)} N\Fun(I,C) \ \to \\ 
 &N\left( \Fun(I,B)\cup_{\Fun(I,A)} \Fun(I,C)\right) \ \to \ N\Fun(I, D)  \ .
\end{align*}
Since $\Fun(I,i):\Fun(I,A)\to\Fun(I,B)$ is again a Dwyer map 
by Proposition \ref{prop:preserve Dwyer}, the first morphism is
a weak equivalence by Thomason's result \cite[Prop.\,4.3]{thomason:model cat cat}.
The second morphism is an isomorphism by the above.
\end{proof}

\begin{eg}
We give a simple example to illustrate that Theorem \ref{thm:Dwyer M pushout}
need not hold anymore if the indexing category $I$ is connected but not
strongly connected. The example involves the poset categories $p[n]$
associated with the totally ordered sets $[n]=\{0<1<\dots<n\}$
for $n=0,1,2$. We write $d_i:p[n-1]\to p[n]$ for the unique functor that 
is injective on objects and whose image does not contain the object $i$.

The functor $d_1:p[0]\to p[1]$ that embeds the terminal category
as the source of the non-identity morphism of $p[1]$
is a Dwyer map (take $W=p[1]$), and also a cofibration in Thomason's model structure
by \cite[Lemme 1]{cisinski-dwyer}.
The following square is a pushout of small categories:
\[ \xymatrix@C=12mm{ 
p[0]\ar[r]^-{d_0}\ar[d]_{d_1} & p[1]\ar[d]^{d_2} \\
p[1] \ar[r]_-{d_0} & p[2]} \]
The nerve functor does {\em not} take this square to a pushout 
of simplicial set: the canonical morphism
\[ \Delta[1]\cup_{\Delta[0]}\Delta[1]\ = \ 
N(p[1])\cup_{N(p[0])} N(p[1]) \ \to \ N(p[2]) \ = \ \Delta[2]\]
is the inclusion of the inner horn into $\Delta[2]$.
This canonical map is a weak equivalence, however, as predicted by
\cite[Prop.\,4.3]{thomason:model cat cat}.

The category $I=p[1]$ is connected, but not strongly connected.
Moreover, the functor $d_1:I=p[1]\to p[2]$ does not factor through
$d_0$ nor $d_2$. So applying $\Fun(I,-)$ to the above square
does {\em not} yield a pushout of categories. 
\end{eg}

Now we introduce our refined notion of equivalence between small categories.
This depends on a class of finite, strongly connected categories.
The restriction to {\em finite} categories 
(i.e., with finitely many objects and finitely many morphisms)
is not essential and can be replaced by
a global bound on the cardinality, see Remark \ref{rk:infinite}.
The strong connectedness of the test categories is essential, however,
because Theorem \ref{thm:Dwyer M pushout} has no analog for general source categories.

\begin{defn}
Let $\Mc$ be a class of finite, strongly connected categories.
A functor $F:X\to Y$ between small categories is an {\em $\Mc$-equivalence}
if the functor $\Fun(I, F):\Fun(I, X)\to\Fun(I ,Y)$ is a weak equivalence of categories
for every category $I$ in $\Mc$. 
\end{defn}

If $\ast$ is a terminal category with one object and its identity, then
$\Fun(\ast,X)$ is isomorphic to $X$. 
So if $\Mc=\{\ast\}$ consists only of a single terminal category, 
$\Mc$-equivalences are precisely the weak equivalences.

\begin{eg} 
  We call a functor $F:X\to Y$ between small categories a {\em homotopy equivalence}
  if there exists a functor $F':Y\to X$ such that both composites $F\circ F'$ and
  $F'\circ F$ can be related to the respective identity functors by a finite chain of
  natural transformations. For example, $F$ is a homotopy equivalence if it
  is an equivalence of categories, or if it has a left adjoint or a right adjoint.
  
  If $F:X\to Y$ is a homotopy equivalence, then so is
  $\Fun(I, F):\Fun(I, X)\to\Fun(I ,Y)$ for every small category $I$.
  So every homotopy equivalence is an $\Mc$-equivalence.
\end{eg}

\begin{eg}
  The set $P=\mN\times\{0,1\}$ becomes a poset if we declare 
  \[ (i,0)\leq (i,1)\text{\quad and\quad} (i,0)\leq (i+1,1) \]
  for all $i\in \mN$, and every element is in relation with itself.
  The nerve of $P$ is the 1-dimensional, weakly contractible simplicial set 
  that can be pictured as follows:
  \[ \xymatrix@C=2mm@R=4mm{  &(0,0)\ar[dl]\ar[dr] && (1,0)\ar[dl]\ar[dr] && (2,0)\ar[dl]\ar[dr] &&\dots\\
      (0,1) && (1,1)&& (2,1) && (3,1)&\dots
    } \]
  (continuing indefinitely to the right).
  Because $P$ is a poset, every functor $I\to P$ from a non-empty strongly connected category
  is constant, and the functor $P\to \Fun(I,P)$ that sends an object of $P$
  to the associated constant functor is an isomorphism of categories.
  So the simplicial set $N \Fun(I,P)$ is weakly contractible.
  This shows that the unique functor $P\to \ast$ is an $\Mc$-equivalence
  for every class $\Mc$ of finite, strongly connected categories. 
  However, the functor $P\to\ast$ is {\em not} a homotopy equivalence because
  the identity functor of $P$ cannot be related by a finite chain of natural transformations
  to a constant functor.
\end{eg}

The class of $\Mc$-equivalences of categories is closed under various constructions:

\begin{prop}\label{prop:global equiv basics} 
  Let $\Mc$ be a class of finite, strongly connected categories.
  The class of $\Mc$-equivalences of small categories enjoys the following properties.
  \begin{enumerate}[\em (i)]
  \item 
    A coproduct of $\Mc$-equivalences is an $\Mc$-equivalence.
  \item 
    A finite product of $\Mc$-equivalences is an $\Mc$-equivalence. 
  \item Let $f_n:Y_n\to Y_{n+1}$ be an $\Mc$-equivalence between small categories, 
    for $n\geq 0$. 
    Then the canonical functor $f_\infty:Y_0\to Y_\infty$ to the colimit 
    of the sequence $\{f_n\}_{n\geq 0}$ is an $\Mc$-equivalence.
  \item  Let 
    \[ \xymatrix{ A \ar[r]^-k \ar[d]_i & C \ar[d]^j\\
      B \ar[r]_-h & D } \]
    be a pushout square of small categories such that $k$ is an $\Mc$-equivalence.
    If in addition $i$ or $k$ is a Dwyer map,
    then the functor $h$ is an $\Mc$-equivalence.
  \end{enumerate}
\end{prop}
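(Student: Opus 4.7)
The strategy is to verify each clause by applying $\Fun(I,-)$ and the nerve for a test object $I\in\Mc$, thereby reducing to the analogous closure property for weak equivalences of simplicial sets. The key inputs are that each $I$ is finite and strongly connected, so that $\Fun(I,-)$ and $N$ cooperate with the relevant colimits, together with Theorem~\ref{thm:Dwyer M pushout} for the pushout assertion (iv). Parts (i)--(iii) are essentially formal; (iv) is where the real work has already been absorbed, by design, into Theorem~\ref{thm:Dwyer M pushout}.

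For (i), strong connectedness of a non-empty $I$ forces any functor $I\to\coprod_\lambda X_\lambda$ to factor through a single summand, so $\Fun(I,\coprod_\lambda X_\lambda)\cong\coprod_\lambda\Fun(I,X_\lambda)$ as categories; since $N$ commutes with coproducts and coproducts of weak equivalences of simplicial sets are weak equivalences, the claim follows. For (ii), $\Fun(I,-)$ and $N$ both preserve products (as right adjoints), so the statement reduces to the stability of weak equivalences under finite products in $\sset$. For (iii), the finiteness of $I$ (and of each $[k]$) ensures that $\Fun(I,-)$ and $N$ commute with the sequential (filtered) colimit, since a functor or natural transformation from a finite category into a filtered colimit of categories must factor through a finite stage. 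Hence $N\Fun(I,Y_\infty)\cong\colim_n N\Fun(I,Y_n)$, and the conclusion follows from the fact that a sequential colimit of weak equivalences of simplicial sets is a weak equivalence (a consequence of the compactness of spheres).

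For (iv), I apply Theorem~\ref{thm:Dwyer M pushout} to the given pushout square; if the Dwyer map is $k$ rather than $i$, I first transpose the square, using the symmetry of pushouts. The theorem then supplies two facts: the image of the square under $\Fun(I,-)$ is again a pushout of categories, and the canonical comparison map exhibits $N\Fun(I,D)$ as a homotopy pushout of the nerves. By Proposition~\ref{prop:preserve Dwyer} the functor $\Fun(I,i)$ (respectively $\Fun(I,k)$) remains a Dwyer map, and its nerve is a cofibration of simplicial sets, since a Dwyer map is injective on objects and faithful. In the resulting homotopy pushout, the leg $N\Fun(I,k)$ is a weak equivalence by the $\Mc$-equivalence hypothesis on $k$, and the opposite leg is a cofibration; cobase change of a weak equivalence along a cofibration in $\sset$ is a weak equivalence, so $N\Fun(I,h)$ is a weak equivalence, as desired. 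The main obstacle—a pushout-preservation and homotopy-pushout statement for $\Fun(I,-)$ under the Dwyer hypothesis—has already been dispatched by Theorem~\ref{thm:Dwyer M pushout}, and only the bookkeeping of which leg carries which hypothesis in the two subcases of (iv) remains.
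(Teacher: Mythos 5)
Your proposal follows the same approach as the paper for all four parts and reaches the correct conclusions. Parts (i)--(iii) are handled just as in the paper, with useful added detail: strong connectedness forces any functor from a non-empty $I$ into a coproduct to land in a single summand, and finiteness of $I$ gives commutation of $\Fun(I,-)$ and of the nerve with the sequential colimit.

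In part (iv) the structure of the argument is also the same as the paper's, and the explicit transposition when the Dwyer map is $k$ rather than $i$ is a helpful clarification of what the paper leaves tacit. There is, however, a small misstatement in that case: you write that ``the leg $N\Fun(I,k)$ is a weak equivalence \dots\ and the opposite leg is a cofibration,'' and then invoke left properness of $\sset$. When $i$ is Dwyer this is exactly right, since the cofibration is $N\Fun(I,i)$. But when $k$ is Dwyer, the cofibration is $N\Fun(I,k)$ itself, not the opposite leg $N\Fun(I,i)$; what saves the argument in that case is simply that $N\Fun(I,k)$ is then a \emph{trivial} cofibration, so its cobase change is again a trivial cofibration, hence a weak equivalence, with no appeal to left properness. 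The paper's own wording --- one of $N\Fun(I,i)$, $N\Fun(I,k)$ is injective and $N\Fun(I,k)$ is a weak equivalence, so the cobase change is a weak equivalence --- compactly covers both of these cases at once.
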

\begin{proof}
  Part~(i) holds because $\Fun(I,-)$, for strongly connected categories $I$, 
  and taking nerves both commute with disjoint unions,
  and disjoint unions of weak equivalences of simplicial sets are weak equivalences.
  Part~(ii) holds because $\Fun(I,-)$ and nerves commute with products
  and a finite product of weak equivalences of simplicial sets is a weak equivalence.
  
  (iii) For finite categories $I$, the functor $\Fun(I,-):\cat\to\cat$
  commutes with filtered colimits of categories. 
  The nerve functor $N:\cat\to\sset$ also
  commutes with filtered colimits. So the canonical morphism
  \[ \colim_{n\geq 0} N\Fun(I, Y_n)\ \to \ N\Fun(I,Y_\infty) \]
  is an isomorphism of simplicial sets.
  Part (iii) is then a consequence of the fact that filtered colimits of simplicial
  sets are fully homotopical for weak equivalences, see for example
  \cite[Thm.\,4.6]{barnea-schlank} or \cite[Cor.\,5.1]{raptis-rosicky}.

  (iv) We let $I$ be a strongly connected category from $\Mc$.
  If $i$ or $k$ is a Dwyer map, then so is $\Fun(I,i):\Fun(I,A)\to\Fun(I,B)$ 
  or $\Fun(I,k):\Fun(I,A)\to\Fun(I,C)$,
  respectively. Dwyer maps are in particular injective on objects and faithful,
  so the morphism $N\Fun(I,i):N\Fun(I,A)\to N\Fun(I,B)$ 
  or the morphism $N\Fun(I,k):N\Fun(I,A)\to N\Fun(I,C)$
  is injective. Since $N\Fun(I,k):N\Fun(I,A)\to N\Fun(I,C)$ is a weak equivalence, 
  the canonical map
  \[ N\Fun(I,B) \ \to \ N\Fun(I,B)\cup_{N\Fun(I,A)} N\Fun(I,C)\]
  is a weak equivalence of simplicial sets. 
  The morphism $N\Fun(I,h):N\Fun(I,B)\to N\Fun(I,D)$
  factors as this latter weak equivalence followed by the weak equivalence of
  Theorem \ref{thm:Dwyer M pushout}.
  Hence $N\Fun(I,h)$ is a weak equivalence of simplicial sets for every $I$ in $\Mc$,
  and so $h$ is an $\Mc$-equivalence of categories.
\end{proof}

We turn to the construction of the $\Mc$-model structure 
on the category of small categories, see Theorem \ref{thm:M cat}.
We will make essential use of Thomason's `non-equivariant'
model structure \cite{thomason:model cat cat} for $\cat$.
The weak equivalences in the Thomason model structure are the weak equivalence of categories,
i.e., functors that induce a weak equivalence of simplicial sets on nerves.
A functor $f:X\to Y$ is a fibration in Thomason's model structure
if and only if the morphism $\Ex^2(N f): \Ex^2(N X)\to \Ex^2(N Y)$
is a Kan fibration of simplicial sets, where $\Ex$ is right adjoint to Kan's subdivision functor
\cite{kan:on css}, and $\Ex^2$ is the twofold iterate of $\Ex$.
The Thomason cofibrations are defined by the left lifting property with respect
to acyclic fibrations.

\begin{defn}
  Let $\Mc$ be a class of finite, strongly connected categories.
  A functor between small categories $f:X\to Y$ is an {\em $\Mc$-fibration}
  if the functor $\Fun(I,f):\Fun(I,X)\to\Fun(I,Y)$
  is a fibration in the Thomason model structure for every category $I$ in $\Mc$.
  A functor is an {\em $\Mc$-cofibration} it if has the left lifting
  property with respect to all functors that are simultaneously
  $\Mc$-equivalences and $\Mc$-fibrations.
\end{defn}

\begin{thm}[$\Mc$-model structure for categories]\label{thm:M cat} 
  Let $\Mc$ be a set of finite, strongly connected categories.
  The $\Mc$-cofibrations, $\Mc$-fibrations and $\Mc$-equivalences form a 
  proper, cofibrantly generated model structure
  on the category of small categories, the {\em $\Mc$-model structure}.
  Every $\Mc$-cofibration is a retract of a Dwyer map. 
\end{thm}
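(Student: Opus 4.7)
I would construct the $\Mc$-model structure by applying Jeff Smith's recognition theorem for combinatorial model categories, building on Thomason's cofibrantly generated model structure as input. Let $I_{\text{Th}}$ denote Thomason's set of generating cofibrations, whose elements are Dwyer maps, and take as candidate generating cofibrations for the $\Mc$-model structure the set
\[
  I_\Mc \ =\ \{I\times i \ :\ I\in\Mc,\ i\in I_{\text{Th}}\}\ .
\]
The adjunction $\cat(I\times A,B)\cong\cat(A,\Fun(I,B))$ identifies the class of $I_\Mc$-injectives with the intersection of the $\Mc$-fibrations and the $\Mc$-equivalences, and Proposition~\ref{prop:preserve Dwyer} shows that every element of $I_\Mc$ is again a Dwyer map.

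To apply Smith's theorem, I would verify: (a) that the class $W$ of $\Mc$-equivalences is accessible, closed under retracts, and satisfies 2-out-of-3---accessibility follows because $\cat$ is locally presentable, each category in $\Mc$ is finite (so $\Fun(I,-)$ preserves filtered colimits), and weak equivalences in $\sset$ are accessibly embedded; (b) that $\mathrm{inj}(I_\Mc)\subset W$, which is immediate; and (c) that $\mathrm{cof}(I_\Mc)\cap W$ is closed under pushout and transfinite composition. Transfinite composition is handled by Proposition~\ref{prop:global equiv basics}(iii). For pushouts, given $f\in\mathrm{cof}(I_\Mc)\cap W$, I factor $f=p\circ f_{\mathrm{cell}}$ via the small object argument, with $f_{\mathrm{cell}}$ an $I_\Mc$-cell complex (hence a Dwyer map, since Dwyer maps are stable under cobase change and transfinite composition) and $p\in\mathrm{inj}(I_\Mc)$. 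By 2-out-of-3, $f_{\mathrm{cell}}\in W$. Since $f$ has the lifting property against the acyclic fibration $p$, $f$ is a retract of $f_{\mathrm{cell}}$ in the arrow category; pushing out displays the pushout of $f$ as a retract of the pushout of $f_{\mathrm{cell}}$, which lies in $W$ by Proposition~\ref{prop:global equiv basics}(iv), and retract-closure finishes the argument. Smith's theorem then produces the $\Mc$-model structure, and the characterization of $\Mc$-cofibrations as retracts of Dwyer maps follows because every $I_\Mc$-cell complex is a Dwyer map.

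For properness, right properness follows because $\Fun(I,-)$ preserves pullbacks and Thomason's model structure is right proper, so pullbacks of $\Mc$-equivalences along $\Mc$-fibrations remain $\Mc$-equivalences when tested through $\Fun(I,-)$. For left properness I would proceed in two steps. The case of a pushout along a Dwyer map is handled directly by Proposition~\ref{prop:global equiv basics}(iv). For an arbitrary $\Mc$-cofibration $i$, factor $i=p\circ i_{\mathrm{cell}}$ with $i_{\mathrm{cell}}$ a Dwyer map and $p$ an acyclic $\Mc$-fibration, decompose the pushout along $i$ as the iterated pushout first along $i_{\mathrm{cell}}$ and then along $p$, and apply Proposition~\ref{prop:global equiv basics}(iv) twice (the intermediate pushout is both a Dwyer map and an $\Mc$-equivalence, by stability of Dwyer maps under cobase change). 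I expect the main obstacle to be the interaction between pushouts and the retract description of $\Mc$-cofibrations, both in part (c) and in left properness---the latter being the argument credited to Lenz and Ozornova in the acknowledgments.
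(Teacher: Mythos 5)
Your overall strategy via Smith's recognition theorem is a genuinely different route from the paper, which verifies the model-category axioms directly by two runs of the small object argument (with a separate set $J_\Mc$ of generating acyclic cofibrations, consisting of the maps $j_{k,n}\times I$). Using Smith's theorem buys you a shorter verification --- you never need to exhibit $J_\Mc$ explicitly --- at the cost of the accessibility hypothesis on the class of $\Mc$-equivalences, which you correctly sketch using finiteness of the categories in $\Mc$. Your argument for condition (c) (that pushouts of $\Mc$-acyclic $\Mc$-cofibrations stay in $W$) is correct: because $f$ itself lies in $W$, 2-out-of-3 forces the cell-complex factor $f_{\mathrm{cell}}$ into $W$, so $f_{\mathrm{cell}}$ is simultaneously a Dwyer map and an $\Mc$-equivalence, and Proposition~\ref{prop:global equiv basics}(iv) applies to it. That retract trick, together with the factorization into a Dwyer map followed by an acyclic fibration, is also how the paper obtains the statement that every $\Mc$-cofibration is a retract of a Dwyer map.

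Your left properness argument, however, has a real gap, and it is exactly where you anticipated trouble. The iterated-pushout decomposition does not go through: after pushing $f\colon A\to C$ out along the Dwyer map $i_{\mathrm{cell}}\colon A\to X$ to obtain $X\to D'$ (an $\Mc$-equivalence by Proposition~\ref{prop:global equiv basics}(iv)), the second pushout square has $p\colon X\to B$ as one leg and $X\to D'$ as the other --- and \emph{neither} of these is a Dwyer map. The map $C\to D'$ that you call the ``intermediate pushout'' is indeed a Dwyer map, but it sits parallel to $f$ and is not part of the square you need to control. Unlike in condition (c), here the map $f$ is only an $\Mc$-equivalence and need not be in $\mathrm{cof}(I_\Mc)$, so you cannot invoke 2-out-of-3 to make either leg of the second square a Dwyer map or even an $\Mc$-equivalence. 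The fix is the retract argument, not a factorization: since the $\Mc$-cofibration $i$ is a retract (relative to its source) of a Dwyer map $i'$, pushing $f$ out along $i$ and along $i'$ produces a retract diagram of cobase changes, and the cobase change along $i'$ is an $\Mc$-equivalence by Proposition~\ref{prop:global equiv basics}(iv); retract-closure of $\Mc$-equivalences concludes. This is precisely the paper's observation that Dwyer maps are flat (h-cofibrations) and that flat maps are closed under retracts, which is the step attributed to Lenz and Ozornova.
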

\begin{proof}
We use the numbering of the model category axioms
as in~\cite[3.3]{dwyer-spalinski}. 
The category of categories is complete and cocomplete,
so axiom MC1 holds.
The $\Mc$-equivalences satisfy the 2-out-of-3 axiom MC2. 
The classes of $\Mc$-equivalences, $\Mc$-cofibrations and $\Mc$-fibrations 
are closed under retracts, so axiom MC3 holds.

One half of MC4 (lifting properties) holds by the definition of $\Mc$-cofibrations.
The proof of the remaining axioms uses Quillen's
small object argument, originally given in \cite[II p.\,3.4]{Q}, 
and later axiomatized in various places,
for example in \cite[7.12]{dwyer-spalinski} or \cite[Thm.\,2.1.14]{hovey-book}.
We let $\Delta[n]=\Delta(-,[n])$ denote the simplicial $n$-simplex,
$\partial\Delta[n]$ its boundary, and $\Delta[n,k]$ its $k$-th horn.
In Thomason's model structure, the set of inclusion functors
\[ i_n\colon  c(\Sd^2(\partial\Delta[n]))\ \to\ c(\Sd^2(\Delta[n])) \ ,  \]
for $n\geq 0$, detects fibrations that are also weak equivalences. 
By adjointness, the set
\begin{equation}\label{eq:I_for_F-proj_on_GT}
I_\Mc \ = \  \{ i_n\times I \colon  c(\Sd^2(\partial\Delta[n]))\times I
\to  c(\Sd^2(\Delta[n]))\times I \}_{n\geq 0, I\in\Mc}
\end{equation}
then detects $\Mc$-fibrations that are also $\Mc$-equivalences.
Similarly, the set of inclusions
$j_{k,n}: c(\Sd^2(\Delta[n,k]))\to c(\Sd^2(\Delta[n]))$,
for $n\geq 1$ and $0\leq k\leq n$, 
detects fibrations of categories; so by adjointness, the set
\[ J_\Mc \ = \  \{  j_{k,n}\times I \}_{n\geq 1, 0\leq k\leq n, I\in\Mc}  \]
detects $\Mc$-fibrations of categories.

All functors in $I_\Mc$ and $J_\Mc$ are Dwyer maps by 
\cite[Prop.\,4.2]{thomason:model cat cat} and Proposition \ref{prop:preserve Dwyer}.
The class of Dwyer maps is closed under coproducts,
composition \cite[Lemma 5.3 (i)]{thomason:model cat cat},
cobase change \cite[Prop.\,4.3]{thomason:model cat cat}
and sequential colimits of categories \cite[Lemma 5.3 (ii)]{thomason:model cat cat}.
Sources and targets of all functors in $I_\Mc$ and $J_\Mc$ 
are finite categories, i.e., they have finitely many objects and finitely many morphisms.
So these categories are finite with respect to sequential colimits of categories.

Now we can prove the factorization axiom MC5.
Every functor in $I_\Mc$ and $J_\Mc$ is an $\Mc$-cofibration by adjointness.
The small object argument applied to the set  
$I_\Mc$ gives a factorization of any functor as
an $\Mc$-cofibration followed by a functor with the right lifting property
with respect to $I_\Mc$. Since $I_\Mc$ detects the $\Mc$-acyclic fibrations,
this provides the factorizations as $\Mc$-cofibrations 
followed by $\Mc$-acyclic fibrations.

For the other half of the factorization axiom MC5
we apply the small object argument to the set $J_\Mc$; 
we obtain a factorization of any functor as
a $J_\Mc$-cell complex followed by a functor with the right lifting property
with respect to $J_\Mc$. Since $J_\Mc$ detects the $\Mc$-fibrations,
it remains to show that every $J_\Mc$-cell complex 
is an $\Mc$-equivalence.
To this end we observe that the morphisms in $J_\Mc$ are $\Mc$-equivalences
and Dwyer maps.
By Proposition \ref{prop:global equiv basics},
the property of being an $\Mc$-equivalence and a Dwyer map
is closed under composition, coproducts, cobase changes and sequential composites.
So every $J_\Mc$-cell complex is a Dwyer map and an $\Mc$-equivalence.

It remains to prove the other half of MC4, i.e., that every $\Mc$-cofibration $f:A\to B$
that is also an $\Mc$-equivalence has the left lifting property
with respect to $\Mc$-fibrations.
The small object argument provides a factorization 
\[  A \ \xra{\ j\ } \ W\ \xra{\ q\ } \ B\]
as a $J_\Mc$-cell complex followed by an $\Mc$-fibration.
In addition, $q$ is an $\Mc$-equivalence since $f$ and $j$ are.
Since $f$ is an $\Mc$-cofibration, a lifting in 
\[\xymatrix{
A \ar[r]^-j \ar[d]_f & W \ar[d]^q_(.6)\sim \\
B \ar@{=}[r] \ar@{..>}[ur] & B }\]
exists. Thus $f$ is a retract of the functor $j$ that has the left lifting
property for $\Mc$-fibrations. 
So $f$ itself has the left lifting property for $\Mc$-fibrations.
This completes the proof of the model category axioms.

Now we argue that every $\Mc$-cofibration $f:A\to B$ is a retract of a Dwyer map.
The small object argument provides a factorization 
\[  A \ \xra{\ i\ } \ X\ \xra{\ p\ } \ B\]
of $f$ as an $I_\Mc$-cell complex followed by a functor that is both an $\Mc$-fibration
and an $\Mc$-equivalence. The functor $i$ is then a Dwyer morphism because this class of
functors is closed under coproducts, cobase change, 
composition and sequential composition.
Since $f$ is an $\Mc$-cofibration, a lifting in 
\[\xymatrix{
A \ar[r]^-i \ar[d]_f & X \ar[d]^p_(.6)\sim \\
B \ar@{=}[r] \ar@{..>}[ur] & B }\]
exists. Thus $f$ is a retract of the Dwyer morphism $i$.

Right properness of the model structure is a straightforward consequence
of right properness of Thomason's non-equivariant model structure,
plus the fact that $\Fun(I,-)$ preserves pullbacks of categories
and takes $\Mc$-fibrations to non-equivariant fibrations.
For left properness we first observe that cobase change along Dwyer morphisms
preserves $\Mc$-equivalences by Theorem \ref{thm:Dwyer M pushout}.
Since the class of Dwyer morphisms is closed under cobase change,
the Dwyer morphisms are thus `flat' in the sense of 
\cite[Def.\,B.9]{hill-hopkins-ravenel}; flat morphisms are called
`h-cofibrations' in \cite[Def.\,1.1]{batanin-berger}.
As observed without proof in \cite[Prop.\,B.11 (iii)]{hill-hopkins-ravenel} 
and \cite[Lemma 1.3]{batanin-berger}, the class of flat morphism 
is closed under retracts. We honor tradition and also leave the proof of this
closure property to the reader. Since $\Mc$-cofibrations are retracts of Dwyer
morphisms, $\Mc$-cofibrations are flat. In particular, this proves left properness
of the $\Mc$-model structure.
\end{proof}

\begin{cor}\label{cor:equivalent}
  Let $\Mc$ be a set of finite, strongly connected categories 
  and $\Nc$ a subset of $\Mc$. 
  \begin{enumerate}[\em (i)]
  \item The identity functor of $\cat$
    is a right Quillen functor from the $\Mc$-model structure to the $\Nc$-model structure.
  \item Suppose that every category in $\Mc$ is homotopy equivalent to a category in $\Nc$. 
    Then the identity is a right Quillen equivalence 
    from the $\Mc$-model structure to the $\Nc$-model structure.  
  \item If $\Mc$ contains a non-empty category, then the identity functor of $\cat$
    is a right Quillen functor from the $\Mc$-model structure
    to the Thomason model structure.
  \end{enumerate}
\end{cor}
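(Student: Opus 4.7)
For part (i), the approach is to unfold the definitions. Since $\Nc\subseteq\Mc$, any functor that is tested for the fibration property against every $I\in\Mc$ is in particular tested against every $I\in\Nc$, and the same remark applies to the $\Mc$-equivalences: every $\Mc$-fibration is an $\Nc$-fibration and every $\Mc$-equivalence is an $\Nc$-equivalence. Intersecting, every $\Mc$-acyclic fibration is an $\Nc$-acyclic fibration. The identity functor, viewed as the right adjoint of the identity adjunction, thus preserves fibrations and acyclic fibrations, so it is right Quillen from the $\Mc$-model structure to the $\Nc$-model structure.

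For part (ii), the plan is to reduce to equality of the classes of weak equivalences: once that is established, the identity Quillen adjunction from (i) is automatically a Quillen equivalence. By (i), every $\Mc$-equivalence is an $\Nc$-equivalence, so only the converse is at stake. Given $I\in\Mc$, choose $J\in\Nc$ homotopy equivalent to $I$ via functors $u:I\to J$ and $v:J\to I$ whose composites are connected to identities by finite chains of natural transformations. Precomposition then yields functors $u^*:\Fun(J,X)\to\Fun(I,X)$ and $v^*:\Fun(I,X)\to\Fun(J,X)$ whose composites are likewise connected to identities by finite chains of natural transformations, since precomposition is 2-functorial; so $u^*$ and $v^*$ are mutually inverse homotopy equivalences of categories, and in particular weak equivalences. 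Both are natural in $X$, giving, for any $\varphi:X\to Y$, a commutative square whose vertical arrows are weak equivalences and whose top arrow $\Fun(J,\varphi)$ is a weak equivalence by hypothesis; by two-out-of-three the bottom arrow $\Fun(I,\varphi)$ is a weak equivalence, so $\varphi$ is an $\Mc$-equivalence.

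For part (iii), the key observation is that the Thomason model structure coincides with the $\{\ast\}$-model structure for $\ast$ the terminal category, since $\Fun(\ast,-)$ is the identity on $\cat$. Given a non-empty $I\in\Mc$ and any object $i_0\in I$, I would exhibit any functor $\varphi:X\to Y$ as a retract of $\Fun(I,\varphi):\Fun(I,X)\to\Fun(I,Y)$ in the arrow category of $\cat$: the section is the constant-functor embedding $X\to\Fun(I,X)$, the retraction is evaluation $\ev_{i_0}:\Fun(I,X)\to X$, and $\ev_{i_0}\circ \const=\Id_X$. Both constructions are natural in $X$, so they furnish the required retract diagram. Since Thomason fibrations and Thomason acyclic fibrations are closed under retracts, every $\Mc$-fibration (resp.\ $\Mc$-acyclic fibration) is a Thomason fibration (resp.\ Thomason acyclic fibration), so the identity is right Quillen from the $\Mc$-model structure to the Thomason model structure.

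The main obstacle, though modest, lies in part (ii): the paper explicitly records that homotopy equivalences in the \emph{target} variable of $\Fun(-,-)$ descend to homotopy equivalences of functor categories, but the symmetric statement for the \emph{source} variable has to be supplied in order to convert $I\simeq J$ into a comparison between $\Fun(I,-)$ and $\Fun(J,-)$. Everything else reduces to formal manipulation of the definitions together with the retract closure of Thomason (acyclic) fibrations.
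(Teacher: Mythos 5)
Your proof is correct and follows essentially the same route as the paper's: (i) by unfolding definitions, (ii) by turning the homotopy equivalence $I\simeq J$ into a natural homotopy equivalence $\Fun(J,-)\simeq\Fun(I,-)$ so that $\Mc$-equivalences and $\Nc$-equivalences coincide, and (iii) by exhibiting $\varphi$ as a retract of $\Fun(I,\varphi)$ via the constant-functor/evaluation pair. The paper just compresses (ii) into the single observation that $\Fun(F,\Cc)$ is a natural homotopy equivalence and phrases (iii) in terms of $\Ex^2 N\Cc$ being a natural retract of $\Ex^2 N\Fun(I,\Cc)$ with Kan fibrations closed under retracts, which is the same argument.
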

\begin{proof}
Part (i) is clear from the definitions.
For part (ii) we let $I$ be any category from $\Mc$ and $F:I\to J$ a homotopy equivalence to 
a category in $\Nc$. Then for every small category $\Cc$,
the functor $\Fun(F,\Cc):\Fun(J,\Cc)\to\Fun(I,\Cc)$ is a homotopy equivalence of
categories, natural in $\Cc$. So a functor is an $\Mc$-equivalence
if and only if it is an $\Nc$-equivalence. The claim follows.

(iii) We $I$ be a non-empty category in $\Mc$ and $i$ an object of $I$.
Then the `constant functor' functor and evaluation at $i$ are
natural functors
\[ \Cc \ \xra{\text{const}}\ \Fun(I,\Cc)\ \xra{\ \ev_i\ }\ \Cc \]
whose composite is the identity functor of $\Cc$.
So the simplicial set $\Ex^2(N \Cc)$ is a natural retract 
of $\Ex^2(N\Fun(I,\Cc))$. Since Kan fibrations 
and acyclic Kan fibrations of simplicial sets are closed under retracts,
this shows that every $\Mc$-fibration is a Thomason fibration,
and every $\Mc$-acyclic fibration is a Thomason acyclic fibration.
\end{proof}

\section{Categories versus simplicial functors}
\label{sec:cats vs sfunctors}

In this section we identify the $\Mc$-homotopy theory of small categories
with the homotopy theory of presheaves on a certain simplicial category $\bO_\Mc$
made from $\Mc$. 
The main result is Theorem \ref{thm:Mcat and Morbispace}, providing a Quillen equivalence
of model categories.
The result and its proof are reminiscent of Elmendorf's theorem \cite{elmendorf}
that identifies the genuine homotopy theory of $G$-spaces, for a topological group $G$,
with the homotopy theory of fixed point diagrams. Indeed, the categories $I$
in $\Mc$ play a role analogous to that of the coset spaces $G/H$
for the closed subgroups $H$ of $G$,
and the orbit category $\bO_\Mc$ plays a role similar to the orbit category of $G$.

Theorem \ref{thm:cofree} is an application of
the Quillen equivalence; it constructs {\em cofree categories} 
that are, informally speaking,
`rich in functors from strongly connected categories'.

\medskip

In the following we will make frequent use
of the fact that the functor categories $\Fun(I,J)$
provide an enrichment of the category $\cat$ of small categories in itself;
in other words, the category $\cat$ is underlying a 2-category.
This means concretely that for all small categories $I,J$ and $K$, composition of functors
extends to functor
 \[ \circ \ : \ \Fun(J,K)\times \Fun(I,J)\ \to\  \Fun(I,K) \ ,\]
and this extended composition is strictly associative and unital.

We use the term `simplicial category' as synonymous for 
`category enriched in the cartesian closed category of simplicial sets'.
These can be viewed as special kinds of simplicial objects
in the category of small categories, namely those whose simplicial
set of objects is constant.

\begin{defn}[Orbit category]\label{def:O_M}
  Let $\Mc$ be a class of finite, strongly connected categories.
  The {\em $\Mc$-orbit category} $\bO_\Mc$ is the simplicial category 
  whose objects are all categories in $\Mc$, and 
  where the simplicial set of morphisms from $I$ to $J$ is
  \[ \bO_\Mc(I,J)\ = \ N\Fun(I, J) \ , \]
  the nerve of the functor category.
  Composition in $\bO_\Mc$ is induced by composition of functors and natural transformations,
  i.e., defined as the composition
\begin{align*}
  \bO_\Mc(J,K)\times \bO_\Mc(I,J)\ &= \ 
  N\Fun(J,K)\times N\Fun(I,J)\\ 
&\iso\   N( \Fun(J,K)\times \Fun(I,J))\ \xra{N\circ} \ N\Fun(I,K)\ = \ \bO_\Mc(I,K) \ .
\end{align*}
  The isomorphism is the fact that the nerve functor preserves products.
\end{defn}

The orbit category $\bO_\Mc$ is defined to parameterize the natural structure
between the nerves of the functor categories $\Fun(I,\Cc)$ for 
a fixed category $\Cc$ and varying $I$ in $\Mc$. 
The following construction of the $\Mc$-nerve $N_\Mc\Cc$ makes this explicit.
We define an {\em $\bO_\Mc$-module} as a contravariant simplicial functor
from the simplicial category $\bO_\Mc$ to the category of simplicial sets.
We write
\[ \bO_\Mc\Mod \ = \ \Fun^{\text{simp}}(\bO^{\op}_\Mc,\sset) \]
for the category of $\bO_\Mc$-modules and simplicial natural transformations.

\begin{con}[$\Mc$-nerve]\label{con_M-nerve}
  Let $\Mc$ be a class of finite, strongly connected categories.
  The {\em $\Mc$-nerve} of a small category $\Cc$ 
  is the $\bO_\Mc$-module $N_\Mc\Cc:\bO_\Mc^{\op}\to \sset$ whose value at $I$ is
  \[ (N_\Mc\Cc)(I) \ = \ N\Fun(I,\Cc)\ , \]
  the nerve of the functor category. If $J$ is another category from $\Mc$,
  the structure morphism
  \[  (N_\Mc\Cc)(J)\times\bO_\Mc(I,J)\ \to \ (N_\Mc\Cc)(I) \]
  is the composite
  \begin{align*}
    (N_\Mc\Cc)(J)\times \bO_\Mc(I,J)\ &= \ 
    N\Fun(J,\Cc)\times   N\Fun(I,J)\\ &\iso \   N( \Fun(J,\Cc) \times\Fun(I,J))
    \ \xra{\ N\circ\ } \ N\Fun(I,\Cc)\ =\ (N_\Mc\Cc)(I) \ .
  \end{align*}
  We have used again that the nerve functor preserves products.
\end{con}

The definition of the $\Mc$-nerve is geared up so that
the $\Mc$-nerve of a category $J$ from $\Mc$ is the simplicial
functor represented by $J$, i.e.,
\begin{equation}\label{eq:J represents}  
 N_\Mc J  \ = \ \bO_\Mc(-,J)\ .
\end{equation}

\begin{rem}
The $\Mc$-nerve functor actually factors through the ordinary nerve as the composite
\[ \cat \ \xra{\ N \ }\ \sset \ \xra{\map(N\Mc,-)} \ \bO_\Mc\Mod\ . \]
The functor $\map(N\Mc,-)$ takes a simplicial set $A$ 
to the $\bO_\Mc$-module with values
\[ \map(N\Mc,A)(I)\ = \ \map(N I,A)\ , \]
the simplicial set of  morphism from $N I$ to $A$.
If $J$ is another category from $\Mc$, the structure morphism
\[  \map(N\Mc,A)(J)\times\bO_\Mc(I,J)\ \to \ \map(N\Mc,A)(I) \]
  is the composite
  \begin{align*}
    \map(N J,A)\times  N\Fun(I,J)\ \iso \ \map(N J,A)\times  \map(N I,N J)\ \xra{\ \circ\ } \ \map(N I,A) \ .
  \end{align*}
  The isomorphism is the fact that the nerve functor is fully faithful
  in the enriched sense, i.e., endowed with natural isomorphisms of simplicial sets
  \[ \bO_\Mc(I,J)\ = \  N \Fun(I,J) \ \iso \ \map(N I,N J)\ .\]
  The isomorphism between $N_\Mc\Cc$ and the composite $\map(N\Mc,-)\circ N$
  is also an instance of the enriched fully-faithfulness, i.e., its value
  at $I\in\Mc$ is the isomorphism
  \[ (N_\Mc \Cc)(I)\ = \  N \Fun(I,\Cc) \ \iso \ \map(N I,N \Cc)\ .\]
  These isomorphisms are compatible with the $\bO_\Mc$-module structure.
\end{rem}

\begin{eg}[Posets]\label{eg:posets}
Every space is weakly equivalent to the nerve of a small category,
and this small category can even be chosen to be a poset.
More is actually true: 
Raptis \cite[Thm.\,2.6]{raptis} shows that Thomason's model structure
on the category of small categories restricts to the
full subcategory $\poset$ of poset categories, 
and that moreover the inclusion $\poset\to\cat$
is a Quillen equivalence.
In particular, poset morphisms are weak equivalences if and only
if they induce weak homotopy equivalences on nerves.
In combination with Thomason's result, this implies that the nerve functor
$N:\poset\to\sset$ is an equivalence of homotopy theories.
  
However, posets categories are very special
from our present global perspective, as their $\Mc$-nerves are always constant.
Indeed, every functor from a strongly connected category to a poset category $P$
is constant, i.e., its image consists of a single object and its identity.
So for every $I$ in $\Mc$, the `constant functor' functor
\[ \text{const}\ : \ P\ \to \ \Fun(I,P) \]
is an isomorphism of categories, and the morphism of nerves
\[ N\text{const}\ : \ N P\ \to \ N\Fun(I,P) \]
is an isomorphism of simplicial sets. 
Put yet another way, the $\Mc$-nerve $N_\Mc P$ of a poset category
is the constant functor with value the nerve $N P$.
\end{eg}

\begin{eg}[Grothendieck constructions]\label{eg:Grothendieck construction}
A functor $F:K\to\cat$ from some small indexing category to the
category of small categories gives rise to a new category, 
the {\em  Grothendieck construction} $K \smallint F$. Objects of this category
are pairs $(y,k)$ where $k$ is an object of $K$ and $y$ is an object of $F(k)$.
Morphisms from $(x,j)$ to $(y,k)$ are all pairs $(\psi,f)$
where $f:j\to k$ is a morphism in $K$ and $\psi:F(f)(x)\to y$ is a morphism
in $F(k)$. Composition is defined by
\[ (\varphi,g)\circ (\psi,f)  \ = \ (\varphi\circ F(g)(\psi), g\circ f) \ .\]
Thomason shows in \cite[Thm.\,1.2]{thomason:hocolim} that the   
Grothendieck construction models the homotopy colimit. More precisely,
Thomason exhibits a chain of two natural weak equivalences between
the nerve of the category $K\smallint F$ and the
homotopy colimit, in the sense of Bousfield and Kan \cite[Ch.\,12]{bousfield-kan},
of the functor
\[ N\circ F\ : \ K \ \to \ \sset\ . \]
This result does {\em not} extend literally to the $\Mc$-nerve.
To illustrate this, we consider another small category $I$.
For varying $k$ in $K$, the evaluation functors $\ev_{I,F(k)}:\Fun(I,F(k))\times I\to F(k)$
provide a natural transformation of category valued functors
\[ \ev_{I,F}\ : \ \Fun(I,F)\times I \ \Longrightarrow\ F \ : \ K \ \to \ \cat\ . \]
This induces a functor on Grothendieck constructions
\[ K\smallint (\Fun(I,F)\times I) \ \to\ K\smallint F \ . \]
The canonical functor
\[ K\smallint (\Fun(I,F)\times I) \ \to\ \left(K\smallint \Fun(I,F)\right)\times I \]
is an isomorphism of categories, so altogether we obtain an evaluation functor
\[ \left(K\smallint \Fun(I,F)\right)\times I\ \to \ K\smallint F \ ,\quad
(k,\psi:I\to F(k),i)\ \longmapsto \ (k, \psi(i)) \ .\]
The adjunction between $-\times I$ and $\Fun(I,-)$
turns this evaluation functor into a functor
\begin{equation}  \label{eq:int_vs_Fun}
 K\smallint \Fun(I,F)\ \to \ \Fun(I,K\smallint F) \ .  
\end{equation}
This functor is {\em not} generally a weak equivalence, not even
if $I$ is finite and strongly connected.

However, the situation changes if the indexing category $K$
is a poset category. Indeed, if $K$ is a poset and $I$ is strongly connected,
then every functor $\psi:I\to K$ is constant, so every functor
$I\to K\smallint F$ is concentrated at a single object of $K$ and its identity.
So then the functor \eqref{eq:int_vs_Fun} is an isomorphism of categories. 
\end{eg}

\begin{con}
As is already clear from Thomason's work, the nerve functor has to be tweaked
if we want it to be right adjoint to a homotopically meaningful functor.
Fortunately, the direct analog of Thomason's patch also works in our context:
we simply postcompose with $\Ex^2$, the twofold iterate of Kan's functor 
$\Ex$ \cite[Sec.\,3]{kan:on css}.
In more detail, we define a functor
\[ \Ex^2\ : \  \bO_\Mc\Mod \ \to \ \bO_\Mc\Mod \]
on the category of $\bO_\Mc$-modules by
\[ (\Ex^2 Y)(I)\ = \ \Ex^2(Y(I))\ . \]
The structure morphism
  \[  (\Ex^2 Y)(J)\times\bO_\Mc(I,J)\ \to \ (\Ex^2 Y)(I) \]
  is the composite
  \begin{align*}
    \Ex^2( Y(J) ) \times \bO_\Mc(I,J)\ &\xra{\Id\times \kappa} \ 
    \Ex^2( Y(J) ) \times \Ex^2( \bO_\Mc(I,J))\\ 
    &\iso \ \Ex^2\left( Y(J) \times \bO_\Mc(I,J)\right)\ 
    \ \xra{\ \Ex^2\circ\ } \ \Ex^2(Y(I)) \ .
  \end{align*}
  Here $\kappa:\Id\to\Ex^2$ is the iterate of the natural weak equivalence $e K:K\to\Ex K$
  defined in \cite[p.\,453]{kan:on css},
and the isomorphism is the fact that $\Ex^2$ preserves products.
For varying $I$ in $\Mc$, the weak equivalences $\kappa_{Y(I)}:Y(I)\to\Ex^2(Y(I))$ 
define a weak equivalence of $\bO_\Mc$-modules
\[ \kappa_Y\ : \ Y\ \to \ \Ex^2 Y\ . \]
\end{con}

\begin{prop}
  Let $\Mc$ be a class of finite, strongly connected categories.  
  Then the functor
  \[ \Ex^2\circ N_\Mc\ : \ \cat \ \to \ \bO_\Mc\Mod \]
  has a left adjoint $\Gamma$. Moreover, the left adjoint can be chosen so that
  \begin{equation}\label{eq:normalize Gamma}
    \Gamma(A\times\bO_\Mc(-,J))\ =\ c(\Sd^2 A)\times J 
  \end{equation}
  for all simplicial sets $A$ and all $J\in \Mc$.
\end{prop}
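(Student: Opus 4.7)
The plan is to leverage the three adjunctions $c \dashv N$, $\Sd^2 \dashv \Ex^2$, and $(-\times J) \dashv \Fun(J,-)$ (the last being the cartesian closed structure of $\cat$), together with the enriched Yoneda lemma for simplicial presheaves on $\bO_\Mc$. Since a left adjoint must preserve colimits and every $\bO_\Mc$-module is canonically a colimit of ``free'' modules $A \times \bO_\Mc(-,J)$ with $A$ a simplicial set and $J\in\Mc$, it suffices to pin down the value of $\Gamma$ on free modules and then extend by colimits.

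First I would verify the formula \eqref{eq:normalize Gamma} on free modules by direct calculation. For any small category $\Cc$, enriched Yoneda followed by the three adjunctions gives a natural chain of bijections
\begin{align*}
\bO_\Mc\Mod(A \times \bO_\Mc(-,J),\, \Ex^2 N_\Mc \Cc)
&\iso \sset(A,\, \Ex^2 N\Fun(J,\Cc)) \\
&\iso \sset(\Sd^2 A,\, N\Fun(J,\Cc)) \\
&\iso \cat(c(\Sd^2 A),\, \Fun(J,\Cc)) \\
&\iso \cat(c(\Sd^2 A)\times J,\, \Cc),
\end{align*}
natural in $A$, $J$, and $\Cc$. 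This both forces \eqref{eq:normalize Gamma} and provides the required universal property on the class of free modules.

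Next I would extend $\Gamma$ to arbitrary $\bO_\Mc$-modules. Every module $Y$ admits the co-Yoneda presentation $Y \iso \int^{J\in\bO_\Mc} Y(J) \times \bO_\Mc(-,J)$, so I would define
\[ \Gamma(Y) \ = \ \int^{J\in\bO_\Mc} c(\Sd^2 Y(J)) \times J \ , \]
a coend that exists because $\cat$ is cocomplete. The adjunction $\Gamma \dashv \Ex^2 N_\Mc$ then follows formally by a Fubini manipulation: for any category $\Cc$,
\begin{align*}
\cat(\Gamma Y,\, \Cc)
&\iso \int_{J} \cat(c(\Sd^2 Y(J))\times J,\, \Cc) \\
&\iso \int_{J} \sset(Y(J),\, \Ex^2 N\Fun(J,\Cc)) \\
&\iso \bO_\Mc\Mod(Y,\, \Ex^2 N_\Mc \Cc),
\end{align*}
the last step being the end-formula for simplicial natural transformations between simplicial presheaves on $\bO_\Mc$. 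Equation \eqref{eq:normalize Gamma} can be arranged to hold strictly (not merely up to canonical isomorphism) by choosing a normalized coend representative in the case $Y = A \times \bO_\Mc(-,J)$, using the co-Yoneda identity in reverse.

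The main obstacle is the bookkeeping with the simplicial enrichment: one must verify that the end/coend manipulations go through in the enriched sense, and that the functor $\Ex^2$ on $\bO_\Mc\Mod$ constructed just before the proposition interacts correctly with tensoring by simplicial sets value-wise. Both points reduce to the fact that $\Ex^2$ (and nerve) preserve products, already used in setting up $\Ex^2$ on modules, so they cost little beyond an explicit remark.
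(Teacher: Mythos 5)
Your approach is essentially the paper's: both compute $\Gamma$ on free modules $A\times\bO_\Mc(-,J)$ via the chain of adjunctions $c\dashv N$, $\Sd^2\dashv\Ex^2$, $(-\times J)\dashv\Fun(J,-)$, and then extend to general $\bO_\Mc$-modules by presenting $Y$ as a colimit of free modules and invoking representability. The paper writes out the co-Yoneda presentation explicitly as the reflexive coequalizer
\[
\coprod_{I,J\in\Mc} Y(J)\times\bO_\Mc(I,J)\times\bO_\Mc(-,I) \rightrightarrows \coprod_{I\in\Mc} Y(I)\times\bO_\Mc(-,I) \to Y,
\]
applies the already-determined $\Gamma$ to the two left terms, and takes the coequalizer in $\cat$; your coend is the same thing in more compressed notation.

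There is one point where you are too quick, and where the coend notation is in fact slightly misleading. You write $\Gamma(Y)=\int^J c(\Sd^2 Y(J))\times J$ and suggest the remaining issue is merely that nerve and $\Ex^2$ preserve products. But the real subtlety is that $c\circ\Sd^2$ does \emph{not} preserve products, and correspondingly the adjunction $(c\circ\Sd^2,\Ex^2\circ N)$ is \emph{not} a simplicially enriched adjunction. If one interprets $\int^J c(\Sd^2 Y(J))\times J$ as the standard $\sset$-enriched coend, with the tensoring $A\otimes X=c(\Sd^2 A)\times X$ on $\cat$, the coequalizer one obtains has left term $\coprod_{I,J}\,c(\Sd^2\bO_\Mc(I,J))\times c(\Sd^2 Y(J))\times I$, which is not isomorphic to the correct term $\coprod_{I,J}\,c\bigl(\Sd^2(Y(J)\times\bO_\Mc(I,J))\bigr)\times I$, precisely because $c\Sd^2$ is not monoidal. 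Consequently the naive enriched coend need not represent the functor $\bO_\Mc\Mod(Y,\Ex^2 N_\Mc(-))$. The cure is exactly what you implicitly do in your Fubini chain, and what the paper does explicitly: define $\Gamma(Y)$ as the coequalizer of $\Gamma$ applied to the free-module coequalizer presentation of $Y$, i.e.\ first form the simplicial set $Y(J)\times\bO_\Mc(I,J)$ and only then apply $c\circ\Sd^2$. With that reading your argument is correct, but the issue deserves an explicit remark rather than a dismissal, since the wrong reading of the coend gives a genuinely different (and wrong) category.
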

\begin{proof}
  To simplify the notation we write $\bO=\bO_\Mc$, i.e., we drop the
  subscript `$\Mc$'. We need to show that for every $\bO$-module $Y$ the functor
  \[  \cat \ \to \ \sets\ , \quad \Cc \ \longmapsto \ 
  \bO\Mod(Y,\Ex^2(N_\Mc\Cc)) \]
  is representable by a small category. Any choices of representing data
  will then canonically assemble into a left adjoint functor.

  If $Y=A\times\bO(-,J)$ for a simplicial set $A$ and a category $J\in \Mc$, then
  \begin{align*}
    \bO\Mod(A\times\bO(-,J),\Ex^2(N_\Mc\Cc)) \ &\iso \
    \bO\Mod(\bO(-,J),\map(A,\Ex^2(N_\Mc\Cc))) \\ 
    &\iso \ \sset(A, \Ex^2(N_\Mc\Cc)(J) )\\     
    &= \ \sset(A, \Ex^2( N\Fun(J,\Cc) ))\\     
    &\iso \ \cat(c(\Sd^2 A), \Fun(J,\Cc) )\\     
    &\iso \ \cat(c(\Sd^2 A)\times J, \Cc )\ .     
  \end{align*}
  So the representability property holds for such $\bO$-modules, and
  we can normalize the left adjoint $\Gamma$ to satisfy \eqref{eq:normalize Gamma}.

  A general $\bO$-module $Y$ is a coequalizer of two morphisms
  \begin{equation}\label{eq:coequalizer}    
    \xymatrix@C=10mm{  
      \coprod_{I,J\in\Mc}     Y(J)\times \bO(I,J)\times \bO(-,I)  
      \  \ar@<.4ex>[r] 
      \ar@<-.4ex>[r] & \ \coprod_{I\in\Mc} Y(I)\times \bO(-,I) }  \ .  
  \end{equation}
  On the $(I,J)$-summand, 
  one of the two morphisms is the product of the action morphism
  $Y(J) \times \bO(I,J) \to Y(I)$
  with the identity of the represented $\bO$-module $\bO(-,I)$.
  The other morphism is the product of the identity of $Y(J)$ with the
  composition morphism of $\bO$-modules $\bO(I,J)\times\bO(-,I)\to\bO(-,J)$.
  The category of small categories has coproducts, so source and
  target of the two morphisms satisfy the representability property, namely by the
  categories 
  \[   {\coprod}_{I,J\in\Mc}\,    c(\Sd^2( Y(J)\times \bO(I,J)))\times I  
  \text{\quad and\quad}  {\coprod}_{I\in\Mc}\, c(\Sd^2 Y(I))\times I \ ,  \]
  respectively. The two morphisms in the coequalizer diagram \eqref{eq:coequalizer}
  correspond to unique functors between the representing categories.
  Since the category of small categories has coequalizers, we can define
  a small category $\Gamma Y$ as a coequalizer in $\cat$ of the corresponding
  functors. The verification that this category indeed represents
  the functor $\bO\Mod(Y,\Ex^2(N_\Mc -))$ uses the fact that
  $\Ex^2\circ N_\Mc$ preserves limits.
\end{proof}

As an enriched functor category, 
the category $\bO_\Mc\text{-mod}$ has a projective  
model structure, see \cite[Thm.\,6.1 (1)]{schwede-shipley:equivalences}.
The weak equivalences or fibrations are those morphisms $f:X\to Y$
such that for every $I$ in $\Mc$,
the morphism of simplicial sets $f(I):X(I)\to Y(I)$ 
is a weak equivalence or Kan fibration, respectively.

\begin{thm} \label{thm:Mcat and Morbispace}
  Let $\Mc$ be a set of finite, strongly connected categories.
  The adjoint functor pair
\[\xymatrix@C=12mm{ 
  \Gamma \ : \ \bO_\Mc\text{\em -mod}  \quad \ar@<.4ex>[r] & 
\quad \cat\ : \ \Ex^2\circ N_\Mc \ar@<.4ex>[l]  }\]  
is a Quillen equivalence between the category
of small categories with the $\Mc$-model structure
and the category of $\bO_\Mc$-modules with the projective model structure.
\end{thm}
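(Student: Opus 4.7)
The plan has three ingredients: check that $(\Gamma, \Ex^2 \circ N_\Mc)$ is a Quillen adjunction; observe that $\Ex^2 \circ N_\Mc$ reflects all weak equivalences (not just those between fibrant objects); and show the unit $\eta_Y \colon Y \to \Ex^2 N_\Mc \Gamma Y$ is a weak equivalence on every cofibrant $\bO_\Mc$-module $Y$. The first two points are nearly tautological from the definitions: $\Mc$-fibrations and $\Mc$-equivalences of categories are by design those functors $f$ for which $\Ex^2 N\Fun(I, f) = (\Ex^2 N_\Mc f)(I)$ is a Kan fibration, respectively weak equivalence, for every $I \in \Mc$, and these are precisely the levelwise criteria in the projective model structure on $\bO_\Mc\Mod$. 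Combining these with the weak equivalence of the unit gives Quillen equivalence via the standard criterion, with the reflection property removing any need for a fibrant replacement of $\Gamma Y$.

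For the unit I would argue by cell induction along the generating projective cofibrations $\partial\Delta[n] \times \bO_\Mc(-, I) \hookrightarrow \Delta[n] \times \bO_\Mc(-, I)$, and more generally on semi-free modules $Y = A \times \bO_\Mc(-, I)$ for a simplicial set $A$. Here $\Gamma Y = c(\Sd^2 A) \times I$ by \eqref{eq:normalize Gamma}, and evaluation at a strongly connected $J \in \Mc$ gives
\[
\Ex^2 N\Fun(J, c(\Sd^2 A) \times I)\ \cong\ \Ex^2 N\Fun(J, c(\Sd^2 A))\ \times\ \Ex^2 N\Fun(J, I)\ .
\]
Because $c(\Sd^2 A)$ is a poset and $J$ is strongly connected, every functor $J \to c(\Sd^2 A)$ is constant, so the first factor collapses to $\Ex^2 N c(\Sd^2 A)$. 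The unit then splits as the product of Thomason's unit $A \to \Ex^2 N c(\Sd^2 A)$ and the comparison $\kappa \colon N\Fun(J, I) \to \Ex^2 N\Fun(J, I)$, both weak equivalences. Hence $\eta_Y(J)$ is a weak equivalence for every $J \in \Mc$.

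For the inductive step, consider a pushout $Y = X \cup_{A \times \bO_\Mc(-, I)} (B \times \bO_\Mc(-, I))$ with the unit already known on the three corners. At a strongly connected $J$, the module $Y(J)$ is a homotopy pushout in $\sset$ (pushout along a cofibration). On the categorical side, $\Gamma$ preserves pushouts and turns the generating cofibration into a Dwyer map (using Proposition \ref{prop:preserve Dwyer} and the fact that $c\Sd^2$ of an inclusion of simplicial sets is a Dwyer map). By Theorem \ref{thm:Dwyer M pushout}, $\Fun(J, -)$ preserves the categorical pushout and $N$ sends it to a homotopy pushout of simplicial sets; applying $\Ex^2$ and comparing via $\kappa$ identifies $(\Ex^2 N_\Mc \Gamma Y)(J)$ with the homotopy pushout of the $(\Ex^2 N_\Mc \Gamma)$-images of the three corners. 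The gluing lemma for homotopy pushouts, applied to the map of squares induced by $\eta$, then yields that $\eta_Y(J)$ is a weak equivalence. Closure under transfinite composition and retracts is formal, using that $\Gamma$ is cocontinuous and that $\Ex^2 N\Fun(J, -)$ commutes with filtered colimits for finite $J$. The main technical obstacle is exactly this inductive step: $\Ex^2 \circ N_\Mc$ is not cocontinuous, so one cannot simply say "the unit on a pushout is the pushout of the units"; the whole argument hinges on Theorem \ref{thm:Dwyer M pushout} to force $(\Ex^2 N_\Mc \Gamma Y)(J)$ to behave like a homotopy pushout of the corner values, which is precisely what the gluing lemma needs.
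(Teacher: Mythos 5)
Your proposal is correct and follows essentially the same route as the paper: verify the Quillen adjunction from the defining characterizations of $\Mc$-fibrations and $\Mc$-equivalences, reduce the Quillen-equivalence check to the unit on cofibrant $\bO_\Mc$-modules because the right adjoint reflects all weak equivalences, and then run a cell induction whose base case is $A\times\bO_\Mc(-,I)$ (using that $c(\Sd^2 A)$ is a poset so $\Fun(J,c(\Sd^2 A))$ collapses to constants for strongly connected $J$), and whose pushout step hinges on Theorem~\ref{thm:Dwyer M pushout} via Dwyer maps and the gluing lemma. The only cosmetic differences from the paper are that you spell out the poset observation explicitly and fold the coproduct closure into transfinite composition, whereas the paper treats coproducts as a separate closure property of the class $\Oc$ and attaches all generating cells at once.
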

\begin{proof}
  We again abbreviate $\bO_\Mc$ to $\bO$. 
  The fact that $(\Gamma,\Ex^2\circ N_\Mc)$ is a Quillen functor pair
  is straightforward.
  Indeed, the $\Mc$-fibrations of small categories are defined so that
  the right adjoint functor $\Ex^2\circ N_\Mc$
  sends them to fibrations in the projective model structure of $\bO$-modules.
  The right adjoint also takes $\Mc$-equivalences of small categories
  to objectwise weak equivalences of $\bO$-modules, by the definition
  of `$\Mc$-equivalences' and the fact that the 
  natural transformation $\kappa_K:K\to \Ex^2 K$ is a weak equivalence of simplicial
  sets.   Even better: $\Ex^2\circ N_\Mc$ preserves and reflects weak equivalences.
  So $\Ex^2\circ N_\Mc$ is in particular a right Quillen functor,
  and $(\Gamma,\Ex^2\circ N_\Mc)$ is a Quillen functor pair.

  A more substantial argument is needed to see that
  the Quillen functor pair $(\Gamma,\Ex^2\circ N_\Mc)$ is a Quillen equivalence.
  The key step is the verification that for every cofibrant $\bO$-module $Y$,
  the adjunction unit $Y\to\Ex^2(N_\Mc(\Gamma Y))$ is an $\Mc$-equivalence.
  To this end we consider the class $\Oc$ of $\bO$-modules for which the
  adjunction unit is a weak equivalence.
  The class $\Oc$ contains all $\bO$-modules of the form $A\times\bO(-,J)$
  for a simplicial set $A$ and $J\in\Mc$.
  Indeed, under the identifications \eqref{eq:J represents} and \eqref{eq:normalize Gamma}, 
  the unit 
  \[ A\times\bO(-,J)\ \to \ \Ex^2(N_\Mc(\Gamma(A\times\bO(-,J)))) \]
  becomes the morphism of $\bO$-modules
  \[ \eta_A\times \kappa_{\bO(-,J)}\ : \
  A\times\bO(-,J)\ \to \ \Ex^2(N(c(\Sd^2 A)))\times \Ex^2\bO(-, J) \ , \]
  the product of the adjunction unit $A\to \Ex^2(N(c(\Sd^2 A)))$
  and the weak equivalence of $\bO$-modules $\kappa_{\bO(-,J)}$.
  Since $\eta:\Id\to \Ex^2\circ N\circ c\circ \Sd^2$ and $\kappa:\Id\to \Ex^2$
  are natural weak equivalences of simplicial sets, this proves that $A\times \bO(-,J)$
  belongs to the class $\Oc$.

  The functors $\Gamma$, $N_\Mc$ and $\Ex^2$ all commute with coproducts (i.e., disjoint unions);
  for $N_\Mc$ this uses the hypothesis that all categories in $\Mc$ are connected.
  Since weak equivalences of simplicial sets are closed under coproducts,
  this proves that the class $\Oc$ is closed under coproducts of $\bO$-modules.
  
  Now we consider an index set $S$, an $S$-indexed family $I_s$ of categories in $\Mc$,
  numbers $n_s\geq 0$ and a pushout square of $\bO$-modules:
  \begin{equation}\begin{aligned}\label{eq:initial_pushout}
      \xymatrix{ 
        \coprod_{s\in S} \partial\Delta[n_s]\times \bO(-,I_s)
        \ar[r]\ar[d] &
        \coprod_{s\in S} \Delta[n_s]\times \bO(-,I_s)  \ar[d] \\
        X \ar[r] & Y }
  \end{aligned}\end{equation}
  We claim that if $X$ belongs to $\Oc$, then so does $Y$.  
  As a left adjoint, $\Gamma$ preserves coproducts and pushout.
  Thus $\Gamma$ takes the square \eqref{eq:initial_pushout} 
  to a pushout square of categories:
  \[  \xymatrix{ 
    \coprod_{s\in S}  c(\Sd^2\partial\Delta[n_s])\times  I_s
    \ar[r]\ar[d] &
    \coprod_{s\in S}  c(\Sd^2\Delta[n_s])\times I_s    \ar[d] \\
    \Gamma X \ar[r] & \Gamma Y }  \]
  Here we exploited the relation \eqref{eq:normalize Gamma}.
  Since the inclusion $ c(\Sd^2\partial\Delta[n_s])\to c(\Sd^2\Delta[n_s])$
  is a Dwyer map \cite[Prop.\,4.2]{thomason:model cat cat},
  the upper horizontal functor in this square is a Dwyer map.
  So the square
  \[ \xymatrix{ 
    \coprod_{s\in S} N_\Mc ( c(\Sd^2\partial\Delta[n_s])\times I_s)\ar[r]\ar[d] &
    \coprod_{s\in S} N_\Mc ( c(\Sd^2\Delta[n_s])\times I_s )\ar[d] \\
    N_\Mc(\Gamma X) \ar[r] & N_\Mc( \Gamma Y) }\]
  is a homotopy pushout square of $\bO$-modules by Proposition \ref{prop:global equiv basics} (iv).
  The functor $\Ex^2$ commutes with coproducts and receives a natural weak equivalence
  from the identity, so the square
  \[ \xymatrix@R=5mm{ 
    \coprod_{s\in S} \Ex^2 (N_\Mc ( \Gamma(\partial\Delta[n_s]\times\bO(-, I_s))))\ar[r]\ar@{=}[d] &
    \coprod_{s\in S} \Ex^2(N_\Mc ( \Gamma(\Delta[n_s]\times \bO(-,I_s)))) \ar@{=}[d] \\
        \coprod_{s\in S} \Ex^2 (N_\Mc ( c(\Sd^2\partial\Delta[n_s]) \times I_s))\ar[r]\ar[dd] &
    \coprod_{s\in S} \Ex^2(N_\Mc ( c(\Sd^2\Delta[n_s])\times I_s)) \ar[dd] \\
    &&\\
    \Ex^2(N_\Mc(\Gamma X)) \ar[r] & \Ex^2(N_\Mc( \Gamma Y)) }\]
  is another homotopy pushout square of $\bO$-modules.
  The adjunction units induce compatible maps from the
  original pushout square \eqref{eq:initial_pushout} to this last square.
  Since $X$ belongs to $\Oc$ by hypothesis and the two upper $\bO$-modules in 
  \eqref{eq:initial_pushout} belong to $\Oc$ by the previous paragraphs,
  also $Y$ belongs to $\Oc$.

  Now we consider a colimit $Y$ of a sequence of morphisms of $\bO$-modules
  \[ Y_0 \ \to \ Y_1 \ \to \ \cdots \ \to \ Y_n \ \to \ \cdots\ . \]
  We claim that if all $Y_n$ belong to $\Oc$,
  then so does the colimit $Y$. Indeed, the functor $\Gamma$ preserves arbitrary colimits,
  and $N_\Mc$ and $\Ex$ preserve sequential colimits; for $N_\Mc$ this uses the hypothesis
  that all categories in $\Mc$ are finite. Colimits of $\bO$-modules are formed objectwise,
  and sequential colimits of simplicial sets are fully homotopical, so this proves the claim.

  The set $\Ic$ of inclusions
  \[   \partial\Delta[n]\times \bO(-,I) \ \to \ \Delta[n] \times \bO(-,I) \ , \]
  for $n\geq 0$ and $I\in \Mc$,
  generates the cofibrations of the projective model structure on $\bO$-modules.
  The previous closure properties of the class $\Oc$ imply that every $\bO$-module
  that can be built as a sequential colimit of cobase changes 
  of coproducts of morphisms in $\Ic$ belongs to $\Oc$. 
  By the small object argument, every cofibrant $\bO$-module is a retract
  of such a special $\bO$-module. Since the class $\Oc$ is also closed under retracts,
  it contains all cofibrant $\bO$-modules.
  
  Now we can complete the proof.
  The right Quillen functor $\Ex^2\circ N_\Mc$ preserves and detects $\Mc$-equivalences,
  and for every cofibrant $\bO$-module $Y$,
  the adjunction unit $Y\to\Ex^2(N_\Mc(\Gamma Y))$ is an $\Mc$-equivalence.
  So the pair $(\Gamma,\Ex^2\circ N_\Mc)$ is a Quillen equivalence,
  for example by \cite[Cor.\,1.3.16]{hovey-book}. 
\end{proof}

\begin{rem}[Generalization to infinite categories]\label{rk:infinite} 
  Our results work more generally for strongly connected categories that are
  not necessarily finite;
  all we need is that there is a {\em set} of representatives for the isomorphism classes
  of the categories in $\Mc$. 
  Effectively, this means that we impose a cardinality bound
  on the categories in the class $\Mc$.
  We only indicate what goes into this, and leave the details to interested readers.

  For any set $\Mc$ of strongly connected categories,
  the definitions of $\Mc$-equivalences, $\Mc$-fibrations and $\Mc$-cofibrations
  make perfect sense without a size restriction, 
  and they form a cofibrantly generated, proper 
  model structure on the category of small categories.
  Moreover, the functor $\Ex^2\circ N_\Mc$ is a right Quillen equivalence 
  to the category of $\bO_\Mc$-modules.
  Indeed, the proofs of Theorems \ref{thm:M cat} and \ref{thm:Mcat and Morbispace}
  generalize almost unchanged.
  There is one caveat, however: if the class $\Mc$ contains an infinite category,
  then $\Fun(I,-)$ and $N_\Mc$ need not commute with sequential colimits.
  So we cannot use the countable version of the small object argument
  for generalizing the proofs of Theorem \ref{thm:M cat}
  and Theorem \ref{thm:Mcat and Morbispace}.
  However, since there
  is a cardinality bound for the categories in $\Mc$, we can choose a sufficiently
  large regular cardinal and then employ a transfinite version of the small
  object argument, for example as in \cite[Thm.\,2.1.14]{hovey-book}.
\end{rem}

In this rest of this section we give an application of
the Quillen equivalence of Theorem \ref{thm:Mcat and Morbispace}:
we construct {\em cofree categories} that are, informally speaking,
`rich in functors from strongly connected categories'.
To motivate this concept, we let $I$ and $\Cc$ be small categories.
In general, one cannot hope that every continuous map $|N I|\to |N \Cc|$
is homotopic to $|N F|$ for some functor $F:I\to\Cc$.
Cofree categories are defined by requiring a strong version of this realization property,
namely that a specific map \eqref{eq:M cofree map}
from the realization of the functor category $\Fun(I,\Cc)$
to the space of maps from  $|N I|$ to $|N \Cc|$ is a weak equivalence.
We will show in Theorem \ref{thm:cofree} below
that for every given set $\Mc$ of finite, strongly connected categories, every homotopy
type can be represented by a small category that is $\Mc$-cofree.

\begin{con}
Let $I$ and $\Cc$ be small categories.
The evaluation functor 
\[ \ev_{I,\Cc}\ :\ \Fun(I,\Cc)\times I\ \to\ \Cc \]
and the fact that the nerve
and geometric realization commute with products produce a continuous map
\[
|N\Fun(I,\Cc)|\times |N I| \ \iso \ |N(\Fun(I,\Cc)\times I)| \ \xra{|N\ev_{I,\Cc}|} \   |N \Cc| \ .
\]
Adjoint to this is a continuous map
\begin{equation}\label{eq:M cofree map}
  |N\Fun(I,\Cc)| \ \to \ \map(|N I|, |N \Cc|) \ .
\end{equation}
Here $\map(-,-)$ is the internal mapping space in the category of compactly generated
spaces \cite{mccord}, i.e., the set of continuous maps with the Kelleyfied compact-open topology. 
\end{con}

\begin{defn}
  Let $\Mc$ be a class of small categories.
  A small category $\Cc$ is {\em $\Mc$-cofree} if the map \eqref{eq:M cofree map}
  is a weak equivalence for every category $I$ in $\Mc$.
\end{defn}

\begin{eg}[Groupoids]
  Groupoids are examples of cofree categories.
  To see this we use the fact that for every simplicial set $A$ and every Kan complex $Z$,
  the map
  \begin{equation}\label{eq:second_adjoint}
    |\map(A,Z)|\ \to\ \map(|A|,|Z|)
  \end{equation}  
  adjoint to
  \[ |\map(A,Z)|\times |A|\ \iso \ |\map(A,Z)\times A|\ \xra{\ |\ev_{A,Z}|\ } \ |Z| \]
  is a weak equivalence.
  Moreover, the nerve functor is fully faithful, even in the enriched sense 
  that the morphism 
  \[  N\Fun(I,J)\ \to \ \map(N I,N J)\]
  is an isomorphism for all small categories $I$ and $J$.
  The nerve of every groupoid $\Gc$ is a Kan complex,
  so the map \eqref{eq:M cofree map} factors as the composite
  \[  |N\Fun(I,\Gc)| \ \xra{\ \iso \ } \ |\map(N I,N\Gc)| \ \xra{\ \simeq \ } \
   \map(|N I|, |N \Gc|)  \]
  of a homeomorphism and a weak equivalence. 
  We conclude that groupoids are $\Mc$-cofree for all $\Mc$.
\end{eg}

\begin{thm}\label{thm:cofree}
  Let $\Mc$ be a set of strongly connected small categories.
  Then for every space $X$ there exists an $\Mc$-cofree category $\Cc$ whose nerve is
  weakly equivalent to $X$.
\end{thm}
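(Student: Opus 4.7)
The plan is to use the Quillen equivalence of Theorem \ref{thm:Mcat and Morbispace} to realize $\Cc$ as $\Gamma$ applied to a carefully chosen $\bO$-module built from $X$, after enlarging $\Mc$ by a terminal category so that the nerve of $\Cc$ itself (not merely the functor categories $\Fun(I,\Cc)$ for $I\in\Mc$) is controlled by the construction.

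Concretely, I set $\Mc'=\Mc\cup\{\ast\}$, where $\ast$ is the one-object, one-morphism category; this is again a set of strongly connected small categories, and every $\Mc'$-cofree category is a fortiori $\Mc$-cofree. I let $Z$ be a Kan complex weakly equivalent to the singular simplicial set of $X$, and define an $\bO_{\Mc'}$-module $Y$ by $Y(I)=\map(NI,Z)$, with structure morphisms $Y(J)\times \bO_{\Mc'}(I,J)\to Y(I)$ induced by composition via the enriched fully-faithfulness isomorphism $N\Fun(I,J)\cong\map(NI,NJ)$. Each value $Y(I)$ is a Kan complex, so $Y$ is projectively fibrant. I choose a cofibrant replacement $Y^c\xrightarrow{\simeq}Y$ and set $\Cc:=\Gamma(Y^c)$. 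By Theorem \ref{thm:Mcat and Morbispace} (extended via Remark \ref{rk:infinite} if $\Mc$ contains infinite categories), the adjunction unit $Y^c\to\Ex^2(N_{\Mc'}\Cc)$ is a weak equivalence; together with the natural weak equivalence $\kappa_{N_{\Mc'}\Cc}:N_{\Mc'}\Cc\to\Ex^2(N_{\Mc'}\Cc)$, this gives a zigzag of $\bO_{\Mc'}$-module weak equivalences between $Y$ and $N_{\Mc'}\Cc$.

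Evaluating the zigzag at $\ast\in\Mc'$, where $\Fun(\ast,\Cc)=\Cc$, yields $Z\simeq N\Cc$, hence $|N\Cc|\simeq|Z|\simeq X$; evaluation at any $I\in\Mc$ yields $\map(NI,Z)\simeq N\Fun(I,\Cc)$. To verify that $\Cc$ is $\Mc'$-cofree, the essential observation is that the $\bO_{\Mc'}$-module structure morphism $N_{\Mc'}\Cc(I)\times \bO_{\Mc'}(\ast,I)\to N_{\Mc'}\Cc(\ast)$ is precisely the nerve of the evaluation functor $\Fun(I,\Cc)\times I\to\Cc$, while the analogous structure morphism of $Y$ is the tautological evaluation $\map(NI,Z)\times NI\to Z$. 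Passing to geometric realizations and taking adjoints, the cofree map $|N\Fun(I,\Cc)|\to\map(|NI|,|N\Cc|)$ and the classical comparison $|\map(NI,Z)|\to \map(|NI|,|Z|)$ are related through the zigzag by a commutative diagram in which the two vertical comparisons $|\map(NI,Z)|\simeq|N\Fun(I,\Cc)|$ and $\map(|NI|,|Z|)\simeq\map(|NI|,|N\Cc|)$ are both weak equivalences (the latter because $\map(|NI|,-)$ preserves weak equivalences between CW complexes). Since the top horizontal map is a weak equivalence (as $Z$ is Kan), so is the cofree map, for every $I\in\Mc$.

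The main obstacle is the rigorous treatment of this commutative diagram, since the equivalence $Y\simeq N_{\Mc'}\Cc$ is only a zigzag of three arrows rather than a single morphism. The cleanest way to carry this out is to verify the required naturality step by step along each arrow in the zigzag, using that each is a strict natural transformation of $\bO_{\Mc'}$-modules so that its interaction with the evaluation structure morphisms $(-)(I)\times NI\to (-)(\ast)$ is strictly compatible on both ends, and then to apply geometric realization and adjunction, which are themselves functorial.
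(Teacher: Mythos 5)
Your proposal is correct and follows essentially the same route as the paper's proof: enlarge $\Mc$ by the terminal category, form the $\bO$-module $I\mapsto\map(NI,Z)$ with $Z$ a Kan model for $X$, observe it is cofree because $Z$ is Kan, use the Quillen equivalence to replace it by $N_{\Mc}\Cc$ for some $\Cc$, and transport cofreeness across the resulting zigzag. The one place you go beyond the paper is in spelling out the zigzag as three explicit morphisms and outlining the naturality check for cofreeness-invariance, a step the paper states without proof; your sketch of that verification is sound.
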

\begin{proof}
  We may assume without loss of generality that $\Mc$ contains a terminal category
  $\ast$ with a single object and its identity; if that is not already the case,
  then we simply replace $\Mc$ by $\Mc\cup\{\ast\}$.
  Then for every $\bO_\Mc$-module $Y$ and every $I\in\Mc$, the composite map
  \[ |N I|\times |Y(I)|\ \iso \ |\bO_\Mc(\ast,I)\times Y(I)|\ \xra{|\text{act}|} \ 
  |Y(\ast)| \]
  is adjoint to a continuous map
  \[ |Y(I)|\ \to \ \map(|N I|,|Y(\ast)|)\ .    \]
  We call the $\bO_\Mc$-module $Y$ {\em cofree} if this map
  is a weak equivalence for all $I\in \Mc$. 
  Then a category $\Cc$ is $\Mc$-cofree if and only if its $\Mc$-nerve $N_\Mc\Cc$ 
  is cofree. Moreover, this notion of cofreeness is invariant under weak equivalences
  of $\bO_\Mc$-modules.

  Now we return to the given topological space $X$ and consider the $\bO_\Mc$-module 
  \[ R_X\ : \ \bO_\Mc^{\op} \ \to \ \sset \ , \quad R_X(I)\ =\ \map(N I,\text{sing}(X))\]
  represented by the singular complex of the space $X$.
  Since the singular complex is a Kan complex, the map \eqref{eq:second_adjoint}
  \[ |\map(N I,\text{sing(X)})|\ \to\ \map(|N I|,|\text{sing}(X)|)  \]
  is a weak equivalence.
  So the represented $\bO_\Mc$-module $R_X$ is cofree.

  The Quillen equivalence of Theorem \ref{thm:Mcat and Morbispace},
  or rather its generalization without the finiteness hypothesis
  in Remark \ref{rk:infinite}, provides a small category $\Cc$ and a chain of 
  weak equivalences of $\bO_\Mc$-modules between $R_X$ and the $\Mc$-nerve $N_\Mc\Cc$. 
  In particular, the nerve of the category $\Cc$ is weakly equivalent to
  the singular complex of $X$. Hence the space $|N\Cc|$ is weakly equivalent to $X$.
  Since $R_X$ is cofree, so is $N_\Mc \Cc$.
  Hence $\Cc$ is $\Mc$-cofree, and this completes the argument.
\end{proof}

\begin{eg}
At this point we recall that every path connected CW-complex
is homotopy equivalent to the nerve of a strongly connected small category.
Even better, McDuff's celebrated theorem \cite{mcduff} says that
such a space is homotopy equivalent to the classifying space of
a monoid $M$, i.e., the nerve of the category $B M$
with a single object whose endomorphism monoid is $M$. 
The functor category $\Fun(B M,\Cc)$ is isomorphic to the category
$M\Cc$ of $M$-objects in $\Cc$, compare Construction \ref{con:monoid actions} below.
Theorem \ref{thm:cofree} says that for every space $X$ the space $\map(|N(B M)|,X)$
can be modeled categorically by $M$-objects: 
there is a small category $\Cc$ 
and a weak equivalence $|N\Cc|\simeq X$, such that moreover the map
\eqref{eq:M cofree map}
\[ |N(M\Cc)| \ \to \  \map(|N(B M)|, |N \Cc|)  \ \simeq \ \map(|N(B M)|, X)  \]
is a weak equivalence.

For the sake of concreteness,
we illustrate this with a specific example.
McDuff's theorem predicts the existence of a monoid whose
classifying space is homotopy equivalent to the 2-sphere $S^2$.
Fiedorowicz \cite{fiedorowicz}
provided an explicit monoid $M$ with this property:
it is generated by two elements $a$ and $b$ subject to the relations
\begin{equation}\label{eq:fiedo}
 a^2 = a  = a b a \text{\qquad and\qquad} b^2 = b =  b a b \ . 
\end{equation}
In particular, $M=\{1,a,b,a b, b a\}$ has five elements.
Given any space $X$, our general theory provides a category $\Cc$ and a weak equivalence
$|N \Cc| \simeq X$  such that moreover the map \eqref{eq:M cofree map}
\[ |N(M\Cc)| \ \to \  \map(|N(B M)|, |N \Cc|)  \ \simeq \ \map(S^2, X)  \]
is a weak equivalence. 
The upshot is that the homotopy type of the space $\map(S^2, X)$ is encoded in 
the category of $\Cc$-objects equipped with two endomorphisms $a$ and $b$ 
that satisfy the relations \eqref{eq:fiedo}.
\end{eg}

\section{Global equivalences, orbispaces, and complexes of groups}
\label{sec:group case}

In this section we specialize the general results of the previous sections
to the class of classifying categories of finite groups;
this special case was the original motivation for the paper, and we refer to
it as the {\em global homotopy theory} of small categories.
In this situation, various special features show up, and
the global homotopy theory of categories is equivalent to the
homotopy theory of orbispaces in the sense of Gepner and Henriques \cite{gepner-henriques}.
Moreover, the cofibrant categories all arise from `complexes of groups' in the sense of
Haefliger \cite{haefliger}.

As we explain in Remark \ref{rk:infinite}, the restriction to {\em finite}
groups is not essential, and one can instead work relative to any class of groups
that has a set of isomorphism classes. For example, the results of this
section have analogs for the class of countable groups.

\begin{con}[Equivariant objects and classifying categories]\label{con:monoid actions}
We let $M$ be a monoid.
We are mostly interested in monoids that are in fact groups,
but various parts of the theory work without inverses.

An {\em $M$-object} in a category $\Cc$
is a pair $(x,\rho)$ consisting of an object $x$ of $\Cc$ and a monoid homomorphism
$\rho:M\to \Cc(x,x)$ to the endomorphism monoid. 
We often leave the homomorphism implicit and denote the $M$-object only by $x$.
A morphism of $M$-objects is a $\Cc$-morphism $f:x\to y$ 
that commutes with the $M$-actions, i.e., 
\[ \rho_y(m)\circ f \ = \ f\circ \rho_x(m) \]
for all $m\in M$.
We denote by $M\Cc$ the category of $M$-objects in $\Cc$.

The unique object of the classifying category $B M$ has a tautological $M$-action,
by the identity homomorphism of $M$. So the pair $(\ast,\Id_M)$
is a tautological $M$-object in $B M$.
Moreover, for every category $\Cc$,
evaluation at the unique object is an isomorphism of categories
\[ \Fun(B M,\Cc)\ \to \ M\Cc \ , \quad F\ \longmapsto \ F(\ast,\Id_M) \]
from the functor category to the category of $M$-objects in $\Cc$.
In the following, we tacitly identify the functor category $\Fun(B M,\Cc)$
with the category $M\Cc$ via this isomorphism.
\end{con}

\begin{defn}
  A functor between small categories $\Phi:X\to Y$ is a {\em global equivalence} 
  or {\em global fibration}
  if the induced functor $G \Phi:G X\to G Y$ on $G$-objects is a weak equivalence
  or Thomason fibration of categories for all finite groups $G$.
\end{defn}

If we specialize Theorem \ref{thm:M cat} to the class of
classifying categories of finite groups, we obtain the following result.

\begin{thm}[Global model structure for categories]\label{thm:global cat}
   The global equivalences and global fibrations are part of a proper,
  cofibrantly generated model structure
  on the category of small categories, the {\em global model structure}.
\end{thm}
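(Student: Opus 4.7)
The plan is to deduce this theorem as a direct specialization of Theorem~\ref{thm:M cat}. I would choose a set $\Mc_{\gl}$ of representatives, one from each isomorphism class of classifying categories $B G$ for $G$ a finite group; since finite groups have only set-many isomorphism classes, such a set exists. Each $B G$ has exactly one object, so it is trivially strongly connected, and finiteness of $G$ ensures finiteness of $B G$. Thus $\Mc_{\gl}$ satisfies the hypotheses of Theorem~\ref{thm:M cat}, producing a proper, cofibrantly generated $\Mc_{\gl}$-model structure on $\cat$.

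It then remains to recognize the $\Mc_{\gl}$-equivalences and $\Mc_{\gl}$-fibrations as global equivalences and global fibrations, respectively. This is the content of Construction~\ref{con:monoid actions}: evaluation at the unique object provides a natural isomorphism of categories
\[ \Fun(B G,\Cc)\ \iso \ G\Cc \ ,\]
natural in both $G$ and $\Cc$. Under this isomorphism, the functor $\Fun(B G,\Phi)$ induced by $\Phi:X\to Y$ corresponds to $G\Phi:G X\to G Y$, so an $\Mc_{\gl}$-equivalence is precisely a functor inducing a weak equivalence of categories $G\Phi$ for every finite group $G$, that is, a global equivalence. The analogous matching holds for fibrations since Thomason fibrations are detected by the same right adjoint functors $\Ex^2\circ N$.

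The main obstacle, such as it is, is bookkeeping: the definition of global (equi)valence ranges over \emph{all} finite groups, whereas Theorem~\ref{thm:M cat} is formulated for a set. I would address this by observing that both the condition of being a global equivalence and that of being a global fibration depend on $G$ only through the isomorphism type of $B G$, so passing to a set of representatives does not change the resulting classes of morphisms. With this identification in hand, every conclusion of Theorem~\ref{thm:M cat}---existence of the model structure, cofibrant generation, and left/right properness---transports directly to give Theorem~\ref{thm:global cat}. No new homotopical input is required.
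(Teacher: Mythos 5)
Your proposal is correct and follows exactly the same route as the paper, which simply states that Theorem~\ref{thm:global cat} is the specialization of Theorem~\ref{thm:M cat} to the class of classifying categories of finite groups, using the identification $\Fun(B G,\Cc)\iso G\Cc$ from Construction~\ref{con:monoid actions}. Your additional care about passing from the proper class of all finite groups to a set of isomorphism-class representatives is a worthwhile point of bookkeeping that the paper leaves implicit.
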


Now we explain how the global homotopy theory of small categories
is the homotopy theory of orbispaces with finite isotropy groups.

\begin{con}[Global orbit category]\label{con:global orbit category}
  The {\em global orbit category} $\bO_{\gl}$ is the simplicial category
  whose objects are all finite groups,
  and where the simplicial set $\bO_{\gl}(K,G)$ of morphisms is
  \[ \bO_{\gl}(K,G)\ = \ N\Fun(B K,B G) \ , \]
  the nerve of the category of functors from $B K$ to $B G$.
  In other words,  $\bO_{\gl}$ is a special case of the orbit category
  as defined in Definition \ref{def:O_M}, for $\Mc$ the class of classifying categories
  $B G$ for all finite groups $G$.

  In this special case, the categories $\Fun(B K,B G)$ and the simplicial sets $\bO_{\gl}(K,G)$
  can be made more explicit. Indeed, every functor $B K\to B G$ is of the form
  $B \alpha$ for a unique group homomorphism $\alpha:K\to G$. 
  Every natural transformation $B\alpha\Longrightarrow B\beta$ 
  for two homomorphisms $\alpha,\beta:K\to G$ is given by a unique
  element $g\in G$ such that $\beta=c_g\circ\alpha$, where $c_g(\gamma)=g\gamma g^{-1}$
  is the inner automorphism specified by $g$.
  In particular, the categories $\Fun(B K, B G)$ are groupoids.
  Hence the simplicial sets $\bO_{\gl}(K,G)$ are 1-types, i.e.,
  they have trivial homotopy groups in dimensions bigger than 1.
  The path components $\pi_0( \bO_{\gl}(K,G))$ 
  are in bijection with conjugacy classes of group homomorphisms. 
  The fundamental group of $\bO_{\gl}(K,G)$ based at a homomorphism $\alpha:K\to G$
  is the centralizer of the image of $\alpha$.
  A different way to say this is that $\bO_{\gl}(K,G)$ is a disjoint union, 
  indexed by conjugacy classes of homomorphisms $\alpha:K\to G$, 
  of classifying spaces of the centralizer of the image of $\alpha$.
\end{con}  

\begin{defn}
  An {\em orbispace} is an $\bO_{\gl}$-module, i.e., a contravariant  simplicial functor
  from $\bO_{\gl}$ to the category of simplicial sets.
  We denote the category of orbispaces and simplicial natural transformations by
  $\bO_{\gl}\Mod$.
\end{defn}

For the class of classifying categories of finite groups, Construction \ref{con_M-nerve}
specializes to the {\em global nerve functor}
\[ N_{\gl} \ : \ \cat \ \to \ \bO_{\gl}\Mod\]
from small categories to orbispaces. The value of $N_{\gl}\Cc$ at a finite group $G$ is
\[ (N_{\gl}\Cc)(G) \ = \ N(G\Cc)\ ,\]
the nerve of the category of $G$-objects in $\Cc$; as before, we implicitly identify
$\Fun(B G,\Cc)$ with $G\Cc$ by evaluation at the tautological $G$-object in $B G$.
For every group $K$ we have
\[  N_{\gl} (B K)  \ = \ \bO_{\gl}(-,K)\ , \]
the restriction of the represented functor 
to $\bO_{\gl}$, compare \eqref{eq:J represents}.  
If we specialize Theorem \ref{thm:Mcat and Morbispace} to the class of
classifying categories of finite groups, we obtain the following result.

\begin{thm} \label{thm:cat and orbispace}
  The adjoint functor pair
\[\xymatrix@C=12mm{ 
  \Gamma \ : \ \bO_{\gl}\Mod  \quad \ar@<.4ex>[r] & 
  \quad \cat\ : \ \Ex^2\circ N_{\gl} \ar@<.4ex>[l]  }\]  
is a Quillen equivalence between the category
of small categories with the global model structure
and the category of orbispaces with the projective model structure.
\end{thm}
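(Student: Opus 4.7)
The plan is to deduce this result as a direct specialization of Theorem \ref{thm:Mcat and Morbispace}. I would choose $\Mc$ to be a set of representatives for the isomorphism classes of classifying categories $BG$, as $G$ ranges over finite groups. The first step is to verify that this $\Mc$ satisfies the hypotheses of Theorem \ref{thm:Mcat and Morbispace}: each $BG$ has one object and $|G|$ morphisms, so it is a finite category; and it is strongly connected because it has a single object. Since isomorphism classes of finite groups form a set, we can indeed take a set of representatives.

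The second step is to match up the two sides of the equivalence under this choice of $\Mc$. On the category side, the isomorphism of categories $\Fun(BG,\Cc)\iso G\Cc$ recalled in Construction \ref{con:monoid actions} shows that a functor $\Phi:X\to Y$ induces a weak equivalence (respectively Thomason fibration) on $\Fun(BG,-)$ for every finite group $G$ if and only if $G\Phi:GX\to GY$ is a weak equivalence (respectively Thomason fibration) for every finite group $G$. Hence the $\Mc$-equivalences and $\Mc$-fibrations are precisely the global equivalences and global fibrations, so the $\Mc$-model structure furnished by Theorem \ref{thm:M cat} agrees with the global model structure of Theorem \ref{thm:global cat}.

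The third step is to match up the targets of the right adjoints. Under the same identification $\Fun(BG,\Cc)\iso G\Cc$, the simplicial category $\bO_\Mc$ of Definition \ref{def:O_M} coincides with the global orbit category $\bO_{\gl}$ of Construction \ref{con:global orbit category}, so $\bO_\Mc\Mod$ is the category of orbispaces. The $\Mc$-nerve functor of Construction \ref{con_M-nerve} then coincides with the global nerve functor $N_{\gl}$, by the very definition of the latter as $(N_{\gl}\Cc)(G)=N(G\Cc)$. Consequently the adjoint pair $(\Gamma,\Ex^2\circ N_\Mc)$ supplied by Theorem \ref{thm:Mcat and Morbispace} becomes, under these identifications, exactly the pair $(\Gamma,\Ex^2\circ N_{\gl})$ in the statement.

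There is no serious obstacle here; the entire argument is bookkeeping to confirm that the specialization matches the notation introduced for the motivating case. The only mild point of care is that the class of finite groups is not literally a set, which is why one must pass to a set of representatives of isomorphism classes before invoking Theorem \ref{thm:Mcat and Morbispace}; but the resulting model structure and Quillen pair are manifestly independent of this choice, since equivalent test categories give the same notion of equivalence by Corollary \ref{cor:equivalent}(ii).
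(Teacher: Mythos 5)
Your proposal is correct and is essentially verbatim the paper's own argument: the paper states Theorem \ref{thm:cat and orbispace} as an immediate specialization of Theorem \ref{thm:Mcat and Morbispace} to $\Mc=\{B G : G\text{ finite group}\}$, using exactly the identification $\Fun(B G,\Cc)\iso G\Cc$ from Construction \ref{con:monoid actions}. Your extra bookkeeping remarks (finiteness and strong connectedness of $B G$, the passage to a set of representatives, and the independence of that choice via Corollary \ref{cor:equivalent}(ii)) are accurate and add nothing controversial.
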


\begin{rem}[Models for the homotopy theory of orbispaces]\label{rk:other orbispc}
Informally speaking, an orbispace is the quotient of a space by the
action of a finite group, but with information about the isotropy groups of the
action built into the formalism.
When modeled by a functor from $\bO_{\gl}$ to simplicial sets,
the value at a finite group $G$ should be thought of as the $G$-fixed points
of the action of an ambient group before taking the quotient.
We briefly recall that the homotopy theory of orbispaces has various other models.
There are different formal frameworks for 
stacks and orbifolds (algebro-geometric, smooth, topological), 
and these objects can be studied with respect to various notions of `equivalence'.
The approach closest to our present context 
uses the notions of {\em topological stacks} and {\em orbi\-spaces} 
as developed by Gepner and Henriques in \cite{gepner-henriques}. 
Their framework allows for a choice of class of `allowed isotropy groups', 
and we will restrict to the case of finite groups. 
In \cite[Def.\,4.1]{gepner-henriques},
Gepner and Henriques define orbispaces as continuous contravariant functors 
from a certain topological category $\Orb$
to the category $\bT$ of compactly generated spaces
in the sense of \cite{mccord}.
The space $\Orb(K,G)$ defined in \cite[(38)]{gepner-henriques} 
is precisely the geometric realization of the simplicial set
$\bO_{\gl}(K,G)$, see \cite[Remark 4.5]{gepner-henriques}. 
Hence objectwise geometric realization provides a functor
\[ |-|\ : \ \bO_{\gl}\Mod \ \to \ 
\Fun^{\text{cts}}(\text{Orb}^{\op},\bT) \ = \ \text{Orb-spaces}\ .\]
For an orbispace $Y$, the value $|Y|(G)$ is $|Y(G)|$, and the continuous functoriality
is the composite
\[ |Y(G)|\times |\bO_{\gl}(K,G)|\ \iso \ |Y(G)\times \bO_{\gl}(K,G)|\ \xra{|Y|} \ |Y(K)|\ , \]
exploiting that geometric realization commutes with products.
Moreover, this functor and its right adjoint provide a Quillen equivalence,
see \cite[Thm.\,6.5]{schwede-shipley:equivalences}.
In combination with our Theorem \ref{thm:cat and orbispace} this shows that
the global homotopy theory of categories is Quillen equivalent to
the homotopy theory of Orb-spaces with finite isotropy groups.

In \cite[Def.\,2.1]{schwede:orbispace}, the author defined a 
different topological indexing category -- also denoted $\bO_{\gl}$ -- that is made from
spaces of linear isometric self-embeddings of $\mR^\infty$.
K{\"o}rschgen \cite[Cor.\,3.13]{koerschgen} related $\Orb$ and $\bO_{\gl}$ by a chain
of two weak equivalences of topological categories.
As a consequence, the two kinds of orbispaces indexed by 
$\Orb$ and $\bO_{\gl}$ are Quillen equivalent, see
\cite[Lemma A.6]{gepner-henriques}.
The paper \cite{schwede:orbispace} identifies orbispaces  
with the global homotopy theory of `spaces with an action
of the universal compact Lie group', 
and with the global homotopy theory of orthogonal spaces 
from \cite[Ch.\,I]{schwede:global}. 
The universal compact Lie group (which is neither compact nor a Lie group)
is a well-known object, namely 
the topological monoid $\Lc=\bL(\mR^\infty,\mR^\infty)$
of linear isometric embeddings of $\mR^\infty$ into itself.
\end{rem}

Now we turn to the analysis of the globally cofibrant small categories,
which essentially coincide with the `complexes of groups'.
In Thomason's non-equivariant model structure on $\cat$, every cofibrant
category is a poset category, compare \cite[Prop.\,5.7]{thomason:model cat cat}.
The global model structure has fewer equivalences and fibrations, 
so more categories are globally cofibrant. And indeed, the category $B G$ is
cofibrant for every finite group $G$, but obviously not a poset category
(unless $G$ is trivial).
We shall now explain that globally cofibrant categories
are in a precise way `built from posets by adding finite automorphism groups';
the rigorous statement is Theorem \ref{thm:cofibrant is complex},
saying that every globally cofibrant small category is isomorphic 
to the opposite of the Grothendieck construction of a pseudofunctor
from a poset to the 2-category $\grp$.
These pseudofunctors have been extensively studied under
the name `complexes of groups', most notably by Haefliger \cite{haefliger}
and Bridson and Haefliger \cite[Ch.\,III.$\Cc$]{bridson-haefliger}.
So in this sense, all globally cofibrant categories `are' complexes of groups.

\begin{con}[The 2-category of groups]\label{def:Grp}
We denote by $\grp$ the 2-category of groups whose objects are all groups.
For groups $G$ and $K$, the category $\grp(K,G)$
is the translation groupoid of $G$ acting by conjugation on the
set of homomorphisms. So the objects of
$\grp(K,G)$ are all group homomorphisms $\alpha:K\to G$,
and morphisms from $\alpha$ to $\beta$ are all $g\in G$ such that
$\beta=c_g\circ\alpha$, where $c_g:G\to G$ is the inner automorphism 
$c_g(\gamma) = g \gamma g^{-1}$.
Composition in the 2-category $\grp$
is composition of homomorphisms and multiplication
in the groups, i.e., given by
\[ (g:\alpha\to \beta)\circ (k:\delta\to\epsilon)\ = \ 
(g\alpha(k):\alpha\circ\delta\to \beta\circ\epsilon) \ .\]
As already mentioned in Construction \ref{con:global orbit category},
the 2-category $\grp$ is isomorphic to the full 2-subcategory of the 2-category $\cat$
with objects the classifying categories $B G$ for all groups $G$.
In more down to earth terms, the functor
\[ \grp(K,G)\ \to \ \Fun(B K, B G) \]
that sends $\alpha:K\to G$ to $B\alpha:B K\to B G$
and $g\in G$ to the natural isomorphism $B\alpha\Longrightarrow B(c_g\circ\alpha)$
given by $g$ 
is an isomorphism of categories,
and these isomorphisms are compatible with the composition functors
for varying groups.
\end{con}

Pseudofunctors (also called {\em weak functors}) 
are a notion of morphism between 2-categories that only commute with composition 
up to specified coherent invertible 2-cells.
The concept goes back to Grothendieck, and the general definition can 
for example be found in \cite[Def.\,7.5.1]{borceux}.
We will only consider pseudofunctors from poset categories to the 2-category $\grp$,
and we'll make the definition explicit below. Since all 2-cells in $\grp$
are invertible, pseudofunctors to $\grp$ are the same as lax functors
and op-lax functors.

\begin{defn}\label{def:complex of groups} 
A {\em complex of groups} is a pseudofunctor
\[ \Gc \ : \ P \ \to \ \grp  \]
to the 2-category of groups, where $P$ is a poset.
\end{defn}

We expand the definition.
A pseudofunctor $\Gc:P\to\grp$ consists of the following data:
\begin{itemize}
\item a group $\Gc(x)$ for every object $x$ of $P$,
\item a group homomorphism $\Gc(x,y):\Gc(x)\to \Gc(y)$ for every comparable pair
  $x\leq y$ in $P$, and
\item a group element $\Gc(x,y,z)\in \Gc(z)$ for every comparable triple
$x\leq y\leq z$ in $P$.
\end{itemize}
Moreover, this data has to satisfy:
\begin{itemize}
\item for every element $x$ of $P$ the homomorphism $\Gc(x,x)$
  is the identity of $\Gc(x)$,
  \item for every pair of comparable elements $x\leq z$ of $P$, we have $\Gc(x,x,z)=\Gc(x,z,z)=1$,
\item for every triple of comparable elements $x\leq y\leq z$, the relation
  \begin{equation} \label{eq:lax_functoriality}
 c_{\Gc(x,y,z)}\circ \Gc(x,z) \ = \ \Gc(y,z)\circ \Gc(x,y)    
  \end{equation}
  holds as homomorphisms $\Gc(x)\to \Gc(z)$, where $c_{\Gc(x,y,z)}$ is conjugation
  by $\Gc(x,y,z)$, and
\item for every quadruple of comparable elements $w\leq x\leq y\leq z$,
  the relation
  \begin{equation}\label{eq:cocycle}
  \Gc(x,y,z)\cdot\Gc(w,x,z)\ = \ \Gc(y,z)(\Gc(w,x,y))\cdot \Gc(w,y,z)
  \end{equation}
  holds in the group $\Gc(z)$.
\end{itemize}
The group $\Gc(x)$ is sometimes called the `local group at $x$',
$\Gc(x,y)$ is the `transition homomorphism',
and $\Gc(x,y,z)$ is the `twisting element'.
The first two conditions encode that the transition homomorphisms
and twisting elements are strictly unital.
Relation \eqref{eq:lax_functoriality} says that
the transition homomorphisms are only functorial in a lax sense,
namely up to the 2-morphisms in $\grp$ specified by the twisting elements.
The relation \eqref{eq:cocycle} is a coherence relation,
saying that the two 2-morphisms
from $\Gc(y,z)\circ\Gc(x,y)\circ\Gc(w,x)$ to $\Gc(w,z)$
arising from the two ways of parenthesizing the triple composite are equal.

Complexes of groups generalize the earlier notion of {\em graphs of groups}
that arose in geometric group theory in the work of Bass \cite{bass}
and Serre \cite{serre:trees}.
Our Definition \ref{def:complex of groups} is a slight variation of
the notion of complex of groups used by Haefliger \cite[Def.\,2.1]{haefliger}
and Bridson-Haefliger \cite{bridson-haefliger}.
There are two differences: we index by posets, whereas
Haefliger \cite{haefliger} restricts to ordered simplicial complexes,
and Bridson-Haefliger \cite{bridson-haefliger} use the 
more general `scwols' (`small categories without loops').
More importantly, we allow arbitrary group homomorphisms for
the transition homomorphisms $\Gc(x,y):\Gc(x)\to\Gc(y)$,
whereas most other sources insist on {\em injective} homomorphisms. 
The motivation for the injectivity condition is that
in all complexes of groups that arise as global quotients,
the transition homomorphisms are injective.

\begin{eg}
Every strict functor to the 1-category
of groups and homomorphisms can be considered as a pseudofunctor to the 2-category $\grp$
in which all twisting elements are identities.
Such complexes of groups are usually called {\em simple}.
\end{eg}

\begin{con}
Let $\Gc:P\to \grp$ be a complex of groups in the sense 
of Definition \ref{def:complex of groups}.  
Postcomposition with the strict 2-functor $B:\grp\to\cat$ produces a
a pseudofunctor
\[ P \ \xra{\ \Gc \ }\ \grp \ \xra{\ B \ }\ \cat \]
to the 2-category of small categories. 
Every pseudofunctor $F:P\to \cat$
has an associated {\em Grothendieck construction} $P\smallint F$, 
compare \cite[Def.\,3.1.2]{thomason:hocolim};
if the pseudofunctor happens to be a strict functor, then
this construction specializes to the one discussed in Example \ref{eg:Grothendieck construction}.

In the special case of a pseudofunctor arising from a complex of groups, 
and when the index category $P$ is a poset or a scwol, 
this Grothendieck construction is simply called
the {\em category associated to the complex of groups};
we write $c(\Gc)$ for this associated category, which 
has the following explicit description, compare
\cite[2.4]{haefliger} or \cite[Ch.\,III.$\Cc$, 2.8]{bridson-haefliger}.
The objects of $c(\Gc)$ are the elements of $P$, and morphisms are given by
\[ c(\Gc)(x,y)\ = \
\begin{cases}
  \Gc(y)  & \text{ if $x\leq y$, and}\\
  \ \emptyset & \text{ if $x\not\leq y$.}
\end{cases}
\]
For comparable elements $x\leq y\leq z$ of $P$, composition 
\[ \circ \ : \ c(\Gc)(y,z)\times c(\Gc)(x,y)\ \to \ c(\Gc)(x,z) \]
is defined by 
\begin{equation}\label{eq:define_composition}
  g\circ h \ = \ g\cdot \Gc(y,z)(h)\cdot \Gc(x,y,z)\  ,  
\end{equation}
where the right hand side is the product in the group $\Gc(z)$.
The first two conditions in Definition \ref{def:complex of groups} 
ensure that the neutral element $1\in\Gc(y)$ in the group structure
is an identity of $y$ in the category $c(\Gc)$.
Relations \eqref{eq:lax_functoriality} and \eqref{eq:cocycle}
enter into the verification that the composition \eqref{eq:define_composition} is associative.
\end{con}

    The next proposition is well known and can be found, in slightly different forms,
    in \cite[p.\,283]{haefliger} and \cite[III.$\Cc$, Prop.\,A.6]{bridson-haefliger}.
    Besides the different kinds of indexing categories for the complexes
    of groups, the main difference is that we allow arbitrary 
    transition homomorphisms, whereas \cite{bridson-haefliger, haefliger}
    require these to be injective.
    To characterize categories associated to the more restricted
    kind of complexes of groups, one must add the condition
    that for all objects $x$ and $y$, 
    the action of $\Cc(x,x)$ on $\Cc(x,y)$ by precomposition is free.

  \begin{prop}\label{prop:characterize complexes} 
    A small category $\Cc$ is isomorphic to the category associated to
    a complex of groups in the sense of Definition \ref{def:complex of groups} 
    if and only if it satisfies the following two conditions:
      \begin{enumerate}[\em (a)]
      \item If $x,y$ are two objects of $\Cc$ that admit morphisms $x\to y$
        and $y\to x$, then $x=y$.
      \item For every pair of $\Cc$-morphisms $f,f':x\to y$ with the same source
        and the same target, there is a unique morphism $\alpha:y\to y$ 
        such that $f'=\alpha\circ f$.
    \end{enumerate}
  \end{prop}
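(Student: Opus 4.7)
The plan is to establish the two directions separately, with most of the work devoted to the reconstruction of a complex of groups from a category satisfying (a) and (b).

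For the easy direction, suppose $\Cc = c(\Gc)$ for some complex of groups $\Gc:P\to\grp$. Then $\Cc(x,y)$ is non-empty exactly when $x\leq y$ in the poset $P$, so condition (a) follows from the antisymmetry of $\leq$. For condition (b), note that by the normalization $\Gc(x,y,y)=1$ and $\Gc(y,y)=\Id$, the composition formula \eqref{eq:define_composition} reduces, for $\alpha\in\Cc(y,y)=\Gc(y)$ and $f\in\Cc(x,y)=\Gc(y)$, to the group product $\alpha\circ f = \alpha\cdot f$. So $\alpha=f'\cdot f^{-1}$ is the unique solution of $f'=\alpha\circ f$.

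For the converse, assume $\Cc$ satisfies (a) and (b). First I would define a relation on objects of $\Cc$ by declaring $x\leq y$ whenever $\Cc(x,y)\neq\emptyset$; reflexivity comes from identity morphisms, transitivity from composition, and antisymmetry from (a). This produces a poset $P$. Next, set $\Gc(x)=\Cc(x,x)$, which is a group by (b): given $f\in\Cc(x,x)$, applying (b) to the pair $(f,\Id_x)$ produces a unique $\alpha\in\Cc(x,x)$ with $\alpha\circ f=\Id_x$, giving left inverses, hence inverses. Then, choose a system of representative morphisms $f_{x,y}\in\Cc(x,y)$ for all $x\leq y$, with $f_{x,x}=\Id_x$; by (b), every morphism $g\in\Cc(x,y)$ can be uniquely written as $\alpha\circ f_{x,y}$ for some $\alpha\in\Gc(y)$.

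Using the representatives, the structure of a complex of groups is forced. Define the transition homomorphism $\Gc(x,y):\Gc(x)\to\Gc(y)$ by requiring $f_{x,y}\circ a = \Gc(x,y)(a)\circ f_{x,y}$ for $a\in\Gc(x)$; uniqueness from (b) and associativity in $\Cc$ immediately yield homomorphy and strict unitality. Define the twisting element $\Gc(x,y,z)\in\Gc(z)$ by $f_{y,z}\circ f_{x,y}=\Gc(x,y,z)\circ f_{x,z}$; when $x=y$ or $y=z$ one representative is an identity, forcing $\Gc(x,x,z)=\Gc(x,y,y)=1$. The lax-functoriality relation \eqref{eq:lax_functoriality} is obtained by expanding $f_{y,z}\circ f_{x,y}\circ a$ in two ways and matching coefficients via the uniqueness clause of (b), and the cocycle identity \eqref{eq:cocycle} similarly arises by computing $f_{y,z}\circ f_{x,y}\circ f_{w,x}$ using the two possible parenthesizations and invoking associativity in $\Cc$.

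Finally, I would define a functor $\Phi:c(\Gc)\to\Cc$ that is the identity on objects and sends a morphism $g\in\Gc(y)=c(\Gc)(x,y)$ to $g\circ f_{x,y}\in\Cc(x,y)$. The existence and uniqueness parts of (b) show that $\Phi$ is bijective on each hom-set, and a direct computation comparing $\Phi(g)\circ\Phi(h)$ with $\Phi(g\circ_{c(\Gc)}h)$, using the definitions of $\Gc(x,y)$ and $\Gc(x,y,z)$ and the composition formula \eqref{eq:define_composition}, confirms that $\Phi$ is a functor, and therefore an isomorphism. The main obstacle is not conceptual but rather the bookkeeping of the various identities; however, each axiom of a complex of groups is uniquely determined by a corresponding associativity calculation in $\Cc$ together with the uniqueness assertion in (b), so once the representatives $f_{x,y}$ are fixed the verifications proceed mechanically.
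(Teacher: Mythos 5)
Your proposal is correct and follows essentially the same route as the paper's proof: form the poset from (a), take $\Gc(x)=\Cc(x,x)$, fix representative morphisms (normalized to identities on the diagonal), read off the transition homomorphisms and twisting elements from the uniqueness clause of (b), and then check that $g\mapsto g\circ f_{x,y}$ is an isomorphism $c(\Gc)\to\Cc$. The only differences are cosmetic (your index convention $f_{x,y}$ versus the paper's $f_{y,x}$, and you spell out the left-inverse argument for the group structure rather than restating (b) as a free-and-transitive action).
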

  \begin{proof}
    The verification that the category
    associated with a complex of groups indexed by a poset
    has properties (a) and (b) is straightforward, and we omit it.
    The more interesting part is the other implication;
    since this result is standard, we will be brief and just indicate how to obtain 
    the complex of groups from the category $\Cc$.

    Property (a) says that the object set of $\Cc$ becomes a poset by declaring
    $x\leq y$ if and only if there exists a morphism $x\to y$; 
    we write $\pos(\Cc)$ for this poset.
    Property (b) can be phrased as two separate conditions, namely:
    \begin{itemize}
    \item Every endomorphism in $\Cc$ is an automorphism, and 
    \item for all objects $x,y$ of $\Cc$ with $x\leq y$, 
      the action of the group $\Cc(y,y)$ 
      on $\Cc(x,y)$ by postcomposition is free and transitive.
    \end{itemize}
    We construct a complex of groups
    \begin{equation}\label{eq:aut}
      \aut\ : \ \pos(\Cc)\ \to \ \grp\ .       
    \end{equation}
    The construction depends on a choice of $\Cc$-morphism $f_{y,x}:x\to y$ 
    for every pair of comparable objects in $\pos(\Cc)$.
    We insist that $f_{x,x}$ is the identity of $x$ for every object of $\Cc$.

    The pseudofunctor $\aut$ sends an object $x$ of $\pos(\Cc)$ to 
    $\aut(x) = \Cc(x,x)$, the automorphism group of $x$ in $\Cc$. 
    For every pair $x\leq y$ of comparable objects, a map
    \[ \aut(x,y)\ : \ \Cc(x,x)\ \to \ \Cc(y,y)\]
    is defined by requiring that
    \begin{equation}\label{eq:define_aut_2}
     \aut(x,y)(\beta)\circ f_{y,x} \ = \ f_{y,x}\circ \beta   
    \end{equation}
    for $\beta\in \Cc(x,x)$. 
    If $\gamma\in\Cc(x,x)$ is another element, then
    \begin{align*}
      \aut(x,y)(\beta)\circ \aut(x,y)(\gamma)\circ f_{y,x}\ &=\ 
      \aut(x,y)(\beta)\circ f_{y,x}\circ \gamma \\ 
      &= \ f_{y,x}\circ \beta \circ \gamma   \     
      = \ \aut(x,y)(\beta \circ \gamma) \circ f_{y,x}\ .     
    \end{align*}
    So the uniqueness clause in property (b) shows that $\aut(x,y)$ is 
    a group homomorphism. The normalization condition $f_{x,x}=\Id_x$
    ensures that $\aut(x,x)$ is the identity.

    For every triple of comparable $\Cc$-objects $x\leq y\leq z$,
    there is a unique element $\aut(x,y,z)\in C(z,z)$ such that 
    \begin{equation}\label{eq:define_aut_3}
      f_{z,y}\circ f_{y,x}\ = \  \aut(x,y,z) \circ f_{z,x}\ . 
    \end{equation}
    The normalization conditions $f_{x,x}=\Id_x$ and $f_{z,z}=\Id_z$
    ensure that $\aut(x,x,z)=\aut(x,z,z)=1$.
    Using \eqref{eq:define_aut_2} and \eqref{eq:define_aut_3} repeatedly gives
    \[ \aut(x,y,z) \circ \aut(x,z)(\beta)\circ f_{z,x} \ 
    = \   \aut(y,z)(\aut(x,y)(\beta))  \circ   \aut(x,y,z) \circ f_{z,x} \]
for all $\beta\in\Cc(x,x)$. Uniqueness in (b) implies that 
\[  \aut(x,y,z) \circ \aut(x,z)(\beta)\ =\
\aut(y,z)(\aut(x,y)(\beta)) \circ \aut(x,y,z) \ ,\]
or, equivalently, condition \eqref{eq:lax_functoriality} for $\aut:\pos(\Cc)\to\grp$.
We omit the verification of the cocycle condition
\eqref{eq:cocycle}, which is similarly straightforward.
This completes the construction of the complex of groups \eqref{eq:aut}.

An isomorphism of categories $\kappa: c(\aut) \to\Cc$
is the identity on objects, and given on morphisms by
\[ \kappa \ : \ c(\aut)(x,y) \ = \ \aut(y)\ \to \ \Cc(x,y)\ , \quad
\kappa(\gamma)\ = \ \gamma\circ f_{y,x}\ ,\]
where $x\leq y$ are comparable elements in $\pos(\Cc)$.
Functoriality is essentially built into the definition of $\aut$:
\begin{align*}
\kappa(\delta)\circ \kappa(\gamma)
\ &= \
\left( \delta\circ f_{z,y}\right)\circ   \left( \gamma\circ  f_{y,x}\right)
\\ 
_\eqref{eq:define_aut_2} \ &= \
\ \delta\circ \aut(y,z)(\gamma)\circ f_{z,y}\circ  f_{y,x}\\
_\eqref{eq:define_aut_3}\ &= \
\ \delta\circ \aut(y,z)(\gamma)\circ \aut(x,y,z)\circ f_{z,x}\\ 
&= \ 
\kappa( \delta\circ \aut(y,z)(\gamma)\circ\aut(x,y,z) )\
=_\eqref{eq:define_composition} \ \kappa( \delta\circ\gamma)
\end{align*}
where $\delta\in\aut(z)$ and $\gamma\in\aut(y)$.
Because $\aut(y)$ acts freely and transitively on $\Cc(x,y)$,
the assignment is bijective on morphism sets, and hence an isomorphism of categories.
\end{proof}

\begin{rem}
  Every poset can be considered as a category with a unique morphism between
  any comparable pair of elements. This construction extends to
  a fully faithful functor from the category of posets 
  and weakly monotone maps to the category of small categories. 
  This functor has a left adjoint
  \[ \pos \ :  \ \cat \ \to \ \poset \]
  that is also left inverse, as follows. 
  We define an equivalence relation on the object
  set of a small category by declaring $x\sim y$ if there exists a morphism $x\to y$
  and a morphism $y\to x$. Then $\pos(\Cc)$ is the set of equivalence
  classes of objects of $\Cc$, ordered by declaring $[x]\leq [y]$
  if and only if there exists a morphism $x\to y$.
    
  If $\Cc$ satisfies conditions (a) and (b)
  of Proposition \ref{prop:characterize complexes}, then the proof shows that
  the indexing poset for the complex of groups can be taken to be $\pos(\Cc)$.
  This justifies our notation `$\pos(\Cc)$' in that proof.
\end{rem}

Now we come to the identification of globally cofibrant categories
as complexes of groups.

\begin{thm}\label{thm:cofibrant is complex}
  Let $\Cc$ be a small category that is cofibrant in the global model structure
  of Theorem \ref{thm:global cat}. Then the opposite category $\Cc^{\op}$
  satisfies properties {\em (a)}
  and {\em (b)} of Proposition \ref{prop:characterize complexes}, 
  and all automorphism groups in $\Cc$ are finite.
  Hence every cofibrant category in the global model structure on $\cat$ is isomorphic
  to the opposite of the category associated to a complex of finite groups indexed by a poset category.
\end{thm}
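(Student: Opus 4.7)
The plan is to identify three properties of a small category $\Cc$ which together imply the conclusion, are equivalent to it via Proposition \ref{prop:characterize complexes}, and are clearly preserved under the operations used to build globally cofibrant categories. Call $\Cc$ \emph{good} if it satisfies: (a) for every pair of distinct objects $x,y$ at most one of the sets $\Cc(x,y)$ and $\Cc(y,x)$ is non-empty; (b') for every pair of parallel morphisms $f,f':y\to x$ there is a unique automorphism $\alpha\in\Cc(y,y)$ with $f'=f\circ\alpha$; and (c) all automorphism groups in $\Cc$ are finite. Passing to opposites turns (b') into condition (b) of Proposition \ref{prop:characterize complexes}, while (a) and (c) are self-dual, so once every globally cofibrant $\Cc$ is known to be good, the theorem follows from that proposition.

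By Theorem \ref{thm:M cat} and inspection of its proof, every globally cofibrant category is a retract of one obtained from the empty category by a (possibly transfinite) sequential composition of pushouts of coproducts of the generating cofibrations $c(\Sd^2(\partial\Delta[n]))\times BG \to c(\Sd^2(\Delta[n]))\times BG$. Every source and target here has the form $P\times BG$ for a finite poset $P$ and a finite group $G$; such a product is good, since two distinct objects $(p,*)$ and $(q,*)$ admit morphisms in both directions only if $p\leq q\leq p$, any two parallel morphisms differ by a unique element of $G$ acting on the second coordinate, and all automorphism groups are isomorphic to $G$. The empty category is trivially good, and goodness is stable under coproducts because morphisms and automorphisms decompose over the summands.

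The key step is that cobase change along a Dwyer map between good categories preserves goodness. Using the explicit description in Construction \ref{con:Dwyer pushout}, the pushout $D$ of a Dwyer map $i:A\to B$ with $B$ good along $k:A\to C$ with $C$ good has object set $\mathrm{ob}(C)\sqcup\mathrm{ob}(V)$, where $V=B\setminus A$, and its hom-sets are either hom-sets of $C$, hom-sets of $B$, or of the form $C(c,k(r(z)))$ for $r:Z\to A$ the right adjoint to the inclusion $A\to Z$. A case analysis on whether source and target of a given arrow lie in $C$ or in $V$ verifies all three conditions: (a) holds because there are no $D$-morphisms from $V$ to $C$, and within $C$ or within $V$ we fall back on goodness of $C$ or $B$; (b') holds because in each surviving case composition in $D$ restricts to composition in $C$ or in $B$, and the required automorphism is forced to lie there, so both existence and uniqueness pass through; and (c) holds because $D(d,d)$ equals $C(d,d)$ or $B(d,d)$ according to where $d$ sits. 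The sequential colimit case reduces to this one because Dwyer maps are fully faithful and injective on objects, so every two-cycle or pair of parallel morphisms in the colimit already appears at some finite stage, and automorphism groups likewise stabilize.

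Finally, retracts preserve goodness: for a retraction $\Cc\xrightarrow{i}\Dc\xrightarrow{r}\Cc$ with $\Dc$ good, any two-cycle in $\Cc$ maps to a two-cycle in $\Dc$, forcing $i(x)=i(y)$, whence $x=y$ after applying $r$; for (b') the unique $\beta\in\aut_\Dc(i(y))$ with $i(f')=i(f)\circ\beta$ yields $\alpha=r(\beta)$ satisfying $f'=f\circ\alpha$, and uniqueness transfers back via the injectivity of $i$ on $\aut_\Cc(y)$, which follows from $r\circ i=\mathrm{Id}_\Cc$; the same injectivity embeds $\aut_\Cc(x)$ into the finite group $\aut_\Dc(i(x))$. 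The main obstacle is the bookkeeping in the Dwyer-pushout case analysis; everything else is essentially formal. Assembling these closure properties shows that every globally cofibrant category is good, and Proposition \ref{prop:characterize complexes} then identifies $\Cc^{\op}$ with the category of a complex of finite groups indexed by the poset $\pos(\Cc^{\op})$.
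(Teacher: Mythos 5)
Your proposal is correct and follows essentially the same strategy as the paper's proof: both define a class of categories (your ``good'', the paper's $\Xc$) characterized by condition (a), the opposite of condition (b), and finiteness of automorphism groups, and then show this class is closed under the operations used to build $I_{\gl}$-cell complexes (coproducts of generating cofibrations, cobase change along Dwyer maps exploiting Construction \ref{con:Dwyer pushout}, sequential colimits of fully faithful functors) and under retracts. Your phrasing of the pushout step as a general ``Dwyer cobase change between good categories preserves goodness'' is marginally more abstract than the paper's, which only invokes it when $B$ is a coproduct of $c(\Sd^2\Delta[n_i])\times B G_i$, but since $V=B\setminus A$ is a full subcategory of $B$ and hence inherits goodness, the two formulations coincide; the key case analysis (morphisms within $C$, within $V$, and the mixed case $D(c,z)=C(c,k(r(z)))$) matches the paper's, and your retract and colimit arguments are the standard ones the paper also uses.
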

\begin{proof}
  We let $\Xc$ be the class of small categories $\Cc$ with finite automorphism groups
  that satisfy property (a) of Proposition \ref{prop:characterize complexes}
  and the following property:
\begin{itemize}
\item[(b)$^{\op}$] 
        For every pair of $\Cc$-morphisms $f,f':x\to y$ with the same source
        and the same target, there is a unique morphism $\omega:x\to x$ 
        such that $f'=f\circ\omega$.
\end{itemize}
Property (b)$^{\op}$ is equivalent to property (b) of Proposition \ref{prop:characterize complexes}
for the opposite category $\Cc^{\op}$.
Moreover, property (a) holds in $\Cc$ if and only if if holds in $\Cc^{\op}$.
So the theorem amounts to showing that every cofibrant small category
belongs to the class $\Xc$.

  We recall from \eqref{eq:I_for_F-proj_on_GT} the functors
  \[ 
  i_n\times  B G  \ : \ c(\Sd^2(\partial\Delta[n]))\times  B G \
  \to \ c(\Sd^2(\Delta[n])) \times B G\ . \]
  For $n\geq 0$ and $G$ in a set of representatives of the isomorphism classes
  of finite groups, these functors form the set $I_{\gl}$ of generating
  cofibrations for the global model structure of Theorem \ref{thm:global cat}. 
  We consider a pushout square of categories:
  \[ \xymatrix{\coprod_{i\in I}     c(\Sd^2(\partial\Delta[n_i]))\times B G_i 
    \ar[r] \ar[d] & 
    \coprod_{i\in I} c(\Sd^2(\Delta[n_i])) \times B G_i  \ar[d] \\
    C \ar[r] & D
  } \]
  The upper horizontal functor is a Dwyer map, and so
  the pushout $D$ has the explicit description provided 
  in Construction \ref{con:Dwyer pushout}.
  We claim that whenever $C$ belongs to the class $\Xc$, then so does $D$.

  As in Construction \ref{con:Dwyer pushout}
  we let $V$ be the full subcategory of $B=\coprod_{i\in I} c(\Sd^2(\Delta[n_i])) \times B G_i$
  whose objects are the ones 
  that do not belong to $A=\coprod_{i\in I} c(\Sd^2(\partial\Delta[n_i])) \times B G_i$.
  Construction \ref{con:Dwyer pushout} shows that $D$ contains $C$ and $V$
  as disjoint full subcategories containing all objects;
  moreover, there are no morphisms from objects in $V$ to objects in $C$.
  Since the subcategories $C$ and $V$ satisfy condition (a), and every object of $D$
  belongs to $C$ or $V$, the category $D$ again satisfies condition (a).
  
  To show that the pushout category $D$ has property (b)$^{\op}$
  we consider two objects $x,y$ of $D$.
  First of all, the monoid  $D(x,x)$ is a group:
  $x$ belongs to $C$, this is true by hypothesis because $C$ is
  a full subcategory of $D$.
  If $x$ belongs to $V$, this is true by inspection of the category $V$,
  which is also a full subcategory of $D$.
  Secondly, we claim that the action of the group $D(x,x)$ on the set $D(x,y)$
  by precomposition is free and transitive.
  If $x$ and $y$ both belong to $C$, this is true by hypothesis because $C$ is
  a full subcategory of $D$.
  If $x$ and $y$ both belong to $V$, this is true by inspection of the category $V$,
  which is also a full subcategory of $D$.
  There are no $D$-morphisms from objects in $V$ to objects in $C$,
  so the only other case to consider is when $x\in C$ and $y\in V$.
  The morphism set $D(x,y)$ is empty if $y$ does not belong to the cosieve
  generated by $A$ in $B$, so there is nothing to show in this case.
  If $y$ belongs to the cosieve generated by $A$ in $B$, then
  \[ D(x,x)\ = \ C(x,x)\text{\quad and\quad} D(x,y)\ = \ C(x,\bar y) \]
  for a specific $C$-object $\bar y$; moreover, the precomposition action of
  $D(x,x)$ on $D(x,y)$ is the precomposition action of
  $C(x,x)$ on $C(x,\bar y)$. The latter is free and transitive because $C$ belongs
  to the class $\Xc$. Hence we have shown that the category $D$ again belongs to the class $\Xc$.
   
  Now we consider fully faithful functors $f_n:Y_n\to Y_{n+1}$ between small categories
  such that $Y_n$ belongs to the class $\Xc$ for every $n\geq 0$.
  We let $Y_\infty$ be a colimit of the sequence. Then the canonical functor
  $Y_n\to Y_\infty$ is fully faithful for every $n\geq 0$, and every pair of
  objects of $Y_\infty$ arises from $Y_n$ for some finite $n$.
  So $Y_\infty$ again belongs to the class $\Xc$.
  Since the empty category belongs to $\Xc$, this shows
  that every $I_{\gl}$-cell complex belongs to the class $\Xc$.
  
  The small object argument shows that every globally cofibrant category
  is a retract of an  $I_{\gl}$-cell complex, compare the proof of 
  Theorem \ref{thm:M cat}. 
  The class $\Xc$ is closed under retracts, so this concludes the proof.
\end{proof}

We close by relating our global homotopy theory of small categories
to the homotopy theory of $G$-categories for a fixed group $G$.

\begin{eg}[Induced categories]
We let $G$ be a finite group. A {\em $G$-category}
is a small category equipped with a left action of $G$ by automorphisms of
categories. We denote by $G\cat$ the category of small $G$-categories and
$G$-equivariant functors.

We let $E G$ be the translation groupoid of $G$: its
object set is $G$ and its morphism set is $G\times G$, where $(g,h)$
has source $h$ and target $g$. Composition is given by $(g,h)\circ(h,k)=(g,k)$.
The group $G$ acts on the category $E G$  by right translation on objects and
morphisms. We obtain an adjoint functor pair
\[\xymatrix@C=10mm{ 
E G\times_G - \ : \ G\cat  \quad \ar@<.4ex>[r]^-{} & 
\quad \cat\quad : \ \Fun(E G,-)\ar@<.4ex>[l]^-{} }\]  
between the category of small $G$-categories and the category
of small categories. As we shall now explain, this adjoint functor
pair is the categorical incarnation of the `global quotient orbispace'
(for the left adjoint $E G\times_G -$)
and of the `underlying $G$-space' (for the right adjoint $\Fun(E G,-)$),
respectively.

We observe that the right adjoint functor $\Fun(E G,-)$
takes global equivalences to $G$-equivariant functors
that induces weak equivalence on $H$-fixed categories
for all subgroups $H$ of $G$; these are the 
weak equivalences of $G$-categories considered in \cite{BMOOPY}.
Indeed, precomposition with the quotient functor $E G\to (E G)/H$ 
and the equivalence
\[ B H \ \to \ (E G)/H \ , \quad \ast \ \longmapsto e H\ , \quad
h \longmapsto \ (h, e) H \]
induce equivalences of categories
\begin{equation}\label{eq:H-fix EG}
\Fun(E G,\Cc)^H \ \iso \   \Fun( (E G)/H,\Cc) \ \simeq \
\Fun( B H,\Cc) \ \iso \  H\Cc \ .
\end{equation}
Equivalences of categories become homotopy equivalences on nerves,
so we conclude that the nerves of the categories $H \Cc$ and
$\Fun(E G,\Cc)^H$ are naturally homotopy equivalent.
In particular, the right adjoint $\Fun(E G,-)$ descends to a
functor on homotopy categories
\[ \Ho(\Fun(E G,-)) \ : \ \Ho^{\gl}(\cat)\ \to \ \Ho(G\cat) \ .\]  
The adjoint pair $(E G\times_G -,\Fun(E G,-))$ is {\em not} a Quillen functor
pair for the global model structure of Theorem \ref{thm:global cat}.
For example, the discrete $G$-category $G/H$ is cofibrant
in the model structure of \cite{BMOOPY}, for every subgroup $H$ of $G$;
however, the category $E G\times_G (G/H)\iso (E G)/H$ is not globally
cofibrant unless $H=G$. This phenomenon is not homotopically
significant and easy to fix: we let $\Gc$ be the class of finite connected
groupoids. Every finite connected groupoid is equivalent to $B G$ for
some finite group $G$, so Corollary \ref{cor:equivalent}
shows that the $\Gc$-model structure on the category of small categories
is Quillen equivalent to the global model structure.
Since $(E G)/H$ is a finite connected groupoid, the natural isomorphism of categories
$\Fun(E G,\Cc)^H \iso \Fun( (E G)/H,\Cc)$ shows that 
$(E G\times_G -,\Fun(E G,-))$ is a Quillen functor pair with respect
to the model structure on $G\cat$ established in \cite{BMOOPY}
and the $\Gc$-model structure on $\cat$.

The following diagram of homotopy categories summarizes
the homotopical content of the adjoint functor pair $(E G\times_G -,\Fun(E G,-))$:
\[ \xymatrix@C=30mm@R=7mm{ 
\Ho(G\cat)\ar[dd]_{\Ho(N)}^\iso\ar@<.4ex>[r]^-{L(E G\times_G -)}&  
\Ho^{\gl}(\cat)\ar@<.4ex>[l]^-{\Ho(\Fun(E G,-))}
 \ar[d]^{\Ho(N_{\gl})}_\iso \\
& \Ho(\bO_{\gl}\Mod)\ar[d]^\iso  \\
\Ho(G\bT)\ar@<.4ex>[r]^-{L(\bL_{G,V})}
  &  \Ho^{\gl}(\spc)\ar@<.4ex>[l]^-{\Ho(Y \longmapsto Y(\Uc_G))}
} \]
In the lower part of the diagram, $G\bT$ denotes the category of $G$-spaces,
and $\Ho(G\bT)$ its homotopy category with respect to $G$-weak equivalences.
Moreover, $\spc$ denotes the category of orthogonal spaces
in the sense of \cite[Def.\,1.1.1]{schwede:global},
and $\Ho^{\gl}(\spc)$ is its homotopy category with respect to the global equivalences of
\cite[Def.\,1.1.2]{schwede:global}.
Orthogonal spaces under global equivalences are a model for unstable global homotopy
theory, and they support a model category structure that is Quillen equivalent
to orbispaces by the results of \cite{schwede:orbispace}.

The horizontal arrows in the diagram are pairs of adjoint functors, 
and all vertical functors are equivalences of categories.
The left vertical nerve functor is an equivalence
of homotopy categories by \cite{BMOOPY}.
The lower right vertical equivalence was discussed in
Remark \ref{rk:other orbispc}.
The lower adjoint functor pair takes 
a $G$-space to the `induced orthogonal space', 
and an orthogonal space to the `underlying $G$-space';
we refer to \cite[Rk.\,1.2.24]{schwede:global} for a more detailed discussion.
We omit the verification that the diagram commutes up to natural isomorphism.
The equivalence \eqref{eq:H-fix EG} may serve as evidence: it says that turning
a category $\Cc$ into a $G$-space by the two ways around the diagram leads to naturally
weakly equivalent $H$-fixed points for all subgroups $H$ of $G$.
\end{eg}

\end{document}